\documentclass[journal]{IEEEtran}
%


%

\usepackage{tikz}
\usepackage{pgf}

\usetikzlibrary{arrows,automata}
\usetikzlibrary{matrix,arrows,decorations.pathmorphing}

\usepackage{tkz-berge}

\usetikzlibrary{arrows,shapes}
\usepackage{amssymb}

\def\0{{\bf 0}}
\def\1{{\bf 1}}
\def\Sigmah{\widehat{\Sigma}}

%

%
\ifCLASSINFOpdf
\else
\fi

\usepackage {amsmath}
\usepackage {amsthm}

\usepackage{url}

\usepackage{graphicx}

\newtheorem{theorem}{Theorem}

\newtheorem{corollary}[theorem]{Corollary}

\newtheorem{lemma}[theorem]{Lemma}

\newtheorem{proposition}[theorem]{Proposition}

\newcommand {\R} {\mathbb {R}}

\def\1{{\bf 1}}

\newcommand{\x}{{\bf x}}

\newcommand{\p}{{\bf p}}

\hyphenation{op-tical net-works semi-conduc-tor}

\newcommand{\specialcell}[2][c]{%
  \begin{tabular}[#1]{@{}c@{}}#2\end{tabular}}

\begin{document}
%
\title{Scaling laws for consensus protocols subject to noise}
%
%
%

\author{Ali~Jadbabaie\thanks{Institute for Data, Systems, and Society, MIT, {\tt jadbabai@mit.edu}}, Alex~Olshevsky\thanks{Department of Electrical and Computer Engineering and the Division of Systems Engineering, Boston University, {\tt alexols@bu.edu}}
\thanks{This research was supported by Air Force award FA990-15-10394 and the AFOSR Complex Networks Program.}}

\maketitle

\begin{abstract}  We study the performance of discrete-time consensus protocols in the presence of additive noise. When the consensus dynamic corresponds to a reversible Markov chain,  we give an exact expression for a weighted version of steady-state disagreement in terms of the stationary distribution and hitting times in an underlying graph. We then show how this result can be used to characterize the noise robustness of a class of protocols for formation control in terms of the Kemeny constant of an underlying graph.

\end{abstract}

%
\IEEEpeerreviewmaketitle

\section{Introduction} The design of policies for control and signal processing in networks of agents (such as UAVs, vehicles, or sensors) has attracted considerable attention over the past  decades. It is commonly believed that such policies benefit from being distributed, for example by relying only on local, nearest-neighbor interactions in a network of nodes.  Understanding how simple, distributed updates can accomplish global objectives and giving quantifiable bounds on their performance has correspondingly been an active area of research recently.

An emerging understanding is that a key tool for such systems is the so-called consensus iteration, namely the update  \[ x(t+1) = P x(t),\] where $P$ is a stochastic matrix. This update has the property that, subject to some technical assumptions on the matrix $P$, the vector $x(t)$ converges to ${\rm span} \{ \1 \}$, the subspace spanned by the all-ones vector. One usually thinks of each component $x_i(t)$ as being controlled by a different ``agent,'' with the agents asymptotically ''coming to consensus'' as  all the components of $x(t)$ approach the same value. 

 It turns out that many sophisticated network coordination tasks can be either entirely reduced to consensus or have decentralized solutions where the consensus iteration plays a key role; we mention formation control \cite{ofm07, ren-form1, ren-form2, othesis}, distributed optimization \cite{tsi, NO}, coverage control \cite{gcb08, conscov2}, distributed task assignment \cite{CBH09, cons-task2},  networked Kalman filtering  \cite{kalman1, kalman2, kalman3, kalman4},  cooperative flocking/leader-following \cite{JLM, leader2}, among many others. 

For example, it is wel-known (and we spell out later in this paper) that the problem of maintaining a formation when every agent can measure the relative positions of its neighbors can be solved with a distributed update rule which turns out to be the consensus update after a change of variable.

As a consequence of the many  applications of consensus, a large literature has recently built up around it. In this paper, we contribute to a strand of this literature which studies the effect of noise; specifically, we study the noisy consensus iteration
 \begin{equation} \label{noisecons} x(t+1) = P x(t) + w(t), \end{equation}  where the matrix $P$ is stochastic as before and the vector 
$w(t)$ represents zero-mean i.i.d. noise. Our goal in the present paper is to contribute to an understanding of how much the ``coming to consensus'' property deteriorates due to the addition of the noise term $w(t)$ in Eq. (\ref{noisecons}). The main concern of this paper is the notion of expected disagreement; namely we will consider the average expected square deviation of the values $x_i(t)$ from their (weighted) average as $t \rightarrow +\infty$. 

Intuitively, the action of multiplying a vector $x(t)$ by a stochastic matrix $P$ has the effect of bringing the components $x_i(t)$ ``closer together,'' while the addition of the noise $w(t)$ counteracts that; the two processes might be expected to balance in some equilibrium level of expected disagreement as $t \rightarrow \infty$. We are motivated by the observation (made in the previous literature on the subject and discussed extensively later) that the equilibrium level of disagreement often grows with the dimension of the matrix $P$. 

Thus even though Eq. (\ref{noisecons}) can be shown to be stable (under some mild technical assumptions) in the sense that expected disagreement between any pair of nodes is bounded as $t \rightarrow \infty$, this stability can be almost meaningless for many classes of large systems in which expected disagreement scales with dimension. This has implications for all distributed protocols which rely on consensus, as it implies that in some cases they may lack robustness under the addition of noise. Building on previous work and contributing to the study of this phenomenon is the goal of the present paper.


%


\subsection{Literature review\label{prevwork}} The problem of analyzing the steady-state level of disagreement in consensus with noise was initiated in \cite{boyd} where, for a symmetric matrix $P$ and under the assumption that the components of $w(t)$ are uncorrelated, an explicit expression in terms of the eigenvalues of $P$ was given.  Recently \cite{zmp} gave an alternative expression in terms of the average resistance associated with a graph based on the symmetric matrix $P$, and further showed this expression can be used to bound the steady-state disagreement from above and below in the more general case when the stochastic matrix $P$ is not necessarily symmetric but rather corresponds to a reversible Markov chain. 

Continuous analogues of Eq. (\ref{noisecons}) have also been studied. When the underlying dynamics comes from a symmetric graph Laplacian,  expressions for equilibrium disagreement in terms of eigenvalues, resistances, and hitting times were presented in \cite{bamieh2}. When the underlying dynamics is not necessarily symmetric but satisfies a normality property, expressions for disagreement in terms of eigenvalues were given in \cite{naomi1}.

The observation that Eq. (\ref{noisecons}) can have asymptotic disagreement which grows with the size of the system was, to our knowledge, first made in \cite{bamieh1} (in continuous time). As observed in \cite{bamieh1} in the context of vehicular formation control, this means that any protocol which relies on consensus iterations 
can suffer from a considerable degradation of performance in large networks. Furthermore, \cite{bamieh1} showed that topology can have a profound influence on performance by proving that while on the ring 
graph the asymptotic disagreement grows linearly with the number of nodes, it remains bounded on the 3D torus (and grows only logarithmically in the number of nodes on the 2D torus).

We next mention some other  related strands of literature. The paper \cite{mot1} which investigated consensus-like protocols with noise in continuous time, focusing on connections with measures of sparsity such as number of spanning trees, number of cut-edges, and the degree-sequence. The related paper \cite{mot2} investigated several  measures of robustness related to equilibrium disagreement in terms of their convexity.  The paper \cite{bamieh2} characterized steady-state disagreement in a number of fractal graphs.
The recent papers \cite{naomi1, naomi2} considered the effects of noise in a continuous-time version of Eq. (\ref{noisecons}) over {directed} graphs. In \cite{naomi1}, explicit expressions for a measure of steady-state disagreement were computed for a number of such graphs. The paper \cite{naomi2} investigated steady-state disagreement on trees and derived a partial ordering capturing which trees have smaller steady-state disagreements. In \cite{naomi3}, noisy consensus with a drift term was considered with a focus on ranking nodes in terms of their variance growth. Our earlier work \cite{jo-prev} focused on connections between asymptotic disagreement and the Cheeger constant and coefficients of ergodicity of the corresponding Markov chain. Moreover, we mention the recent paper \cite{summers} which considered approximation algorithms for the problem of designing networks that minimize equilibrium disagreement. Finally, there is considerable work on the  leader selection problem, where only some of the nodes are performing consensus iteration, which has a similar flavor and which we do not survey here.

\subsection{Our contributions} Under the assumption that the matrix $P$ is reversible, we give an {\em exact} expression for a weighted version of the equilibrium disagreement where the disagreement at each node is weighted proportionally to its importance in the graph corresponding to the matrix $P$. Our expression is combinatorial in that it involves hitting times and the stationary distribution of $P$. Furthermore, we allow the noise $w(t)$ to have any covariance matrix (though it must be i.i.d. in time). In the previous literature such expressions were available only for the special case when the matrix $P$ was symmetric and all the noises $w_i(t)$ were uncorrelated. 

This expression is the main result of this paper and it has three main consequences. First, our expression allows us to compute the weighted steady-state disagreement corresponding to simple averaging on undirected graphs, when each node puts an equal weight on all its neighbors. Updates of this form are the canonical example of distributed averaging algorithms. As a consequence, we are able to compute the weighted steady state disagreement for such updates on many graphs, ranging from simple examples such as the line graph and the star graph, to more sophisticated cases such as Erdos-Renyi random graphs and dense regular graphs.

Secondly, our results lead to an explicit combinatorial expression (again in terms of hitting times and the stationary distribution of $P$) which provides the best known approximation for the unweighted steady-state disagreement (where the disagreement of each node is weighted equally). This improves on the results of \cite{zmp}, which had the previously best combinatorial approximation (in terms of graph resistances) of unweighted disagreement.

Thirdly, this generalization allows us to apply our results to the problem of formation control and characterize the noise resilience of a natural class of first-order protocols for it. It turns out that there is a very simple expression for the noise resilience of a symmetric formation control protocol: we show it is proportional to the so-called Kemeny constant of an underlying graph. This observation is new and allows for the easy computation of the scaling of noise resilience on a variety of graphs. 

Finally, we remark that our proof strategy is also of independent interest on its own. Most previous papers relied on explicit diagonalization of the system of Eq. (\ref{noisecons}). This can be made to work when the eigenvalues of $P$ are known and $P$ is symmetric (allowing us to change variables without affecting the covariance of the noise $w(t)$). However, this approach runs into obstacles when either of these assumptions is not satisfied. Here we introduce a different idea: we relate the recursions for steady-state covariance of Eq. (\ref{noisecons}) to recursions for hitting times on certain graphs.

\subsection{The organization of this paper} 

The main result of this paper, namely an exact expression for weighted steady-state disagreement in noisy consensus, is presented in Section \ref{proof} as Theorem \ref{mainthm}. Section \ref{proof} gives a proof of this result and then discusses simplifications in a number of special cases. Additionally bounds on unweighted steady-state disagreement are presented which improve on the current state of the art from \cite{zmp}. 

Section \ref{examples} then uses Theorem \ref{mainthm} to work out how disagreement scales with the number of nodes for simple averaging on a number of common graphs. Section \ref{symmetric}  introduces the problem of understanding the performance of formation control from noisy measurements of relative position, and, using Theorem \ref{mainthm}, characterizes this  in terms of the Kemeny constant of an underlying graph. Finally, simulations are provided in Section \ref{simul} and the paper finishes with some concluding remarks in Section \ref{conclusion}.

\section{Asymptotic disagreement in noisy consensus\label{proof}} In this section, we state and prove our main result, which is an explicit expression for the weighted steady state disagreement in noisy consensus. Additionally, we work some simplifications of our result for the case when the matrix $P$ from Eq. (\ref{noisecons}) is symmetric and discuss connections to resistance, Kemeny constant, and unweighted steady-state disagreement. 

We begin with a concise statement of our main result, starting with a number of definitions. {\em For the remainder of this paper, we will assume $P$ to be a stochastic, irreducible, and aperiodic matrix}, and we let $\pi$ be the stationary distribution vector of the Markov chain with transition matrix $P$, i.e., $$\pi^T P = \pi^T, ~~~~~~~~~ \sum_{i=1}^n \pi_i = 1.$$ Alternatively, $\pi$ is simply the unique normalized left-eigenvector corresponding to the dominant eigenvalue of $1$ of the stochastic matrix $P$. We note that, for the remainder paper, we will find it convenient to abuse notation by conflating the matrix $P$ and the Markov chain whose transition matrix is $P$ (for example, we will refer to $\pi$ as the stationary distribution of $P$). 

We will use $D_{\rm \pi}$ to stand for the diagonal matrix whose $i$'th diagonal entry is $\pi_i$.  
Furthermore, we define the weighted average vector, \vspace{0.1cm} $$ \overline{x}(t) := \left( \sum_{i=1}^n \pi_i x_i(t) \right) \1,$$ as well as the error vector $$e(t) := x(t) - \overline{x}(t).$$

\vspace{0.1cm}
 Intuitively, $e(t)$ measures how far away the vector $x(t)$ is from consensus. Indeed, it is easy to see that the noiseless update $x(t+1)=Px(t)$ has the property that $x(t)$ converges to $\left( \sum_i \pi_i x_i(0) \right) \1$. The quantity $e(t)$ thus measures the difference between the ``current state'' $x(t)$ and the limit of the noiseless version of Eq. (\ref{noisecons}) starting from $x(t)$. 

Our  goal is to understand how big  the error $e(t)$ can get as $t$ goes to infinity. Due to the addition of noise $w(t)$ in Eq. (\ref{noisecons}), the error vectors $e(t)$ are random variables. Recall that $w(t)$ is zero-mean  i.i.d., and we now  introduce the notation $\Sigma_{w}$ for its covariance.  In order to measure deviation from consensus, we will consider  the following two linear combinations of squared errors at each node,  \vspace{-0.3cm} \begin{eqnarray*} \delta(t) & := & \sum_{i=1}^n \pi_i E[ e_i^2(t)] \\
\delta^{\rm uni}(t) & := & \frac{1}{n} \sum_{i=1}^n E[e_i^2(t)],
\end{eqnarray*}  i.e., we weigh the squared error at each node  either proportionally to the stationary distribution of the node or uniformly.  Finally, our actual measures of performance will be the asymptotic quantities  \vspace{0.01cm}
\vspace{-0.01cm} \begin{eqnarray*} \delta_{\rm ss} & := & \lim \sup_{t \rightarrow \infty} \delta(t) \\
\delta_{\rm ss}^{\rm uni} & := & \lim \sup_{t \rightarrow \infty} \delta^{\rm uni}(t),
\end{eqnarray*} which capture the limiting disagreement among the nodes. We will refer to these quantities as weighted steady-state disagreement and unweighted steady-state disagreement, respectively. We will sometimes write $\delta_{\rm ss}(P, \Sigma_{w})$ when the update matrix $P$ and the noise covariance $\Sigma_w$ are not clear from context and likewise for $\delta_{\rm ss}^{\rm uni}$.

Before stating our main result, let us recall the notion of hitting time from node $i$ to node $j$ in a Markov chain: this is the expected time until the chain visits $j$ for the first time starting from node $i$.  We use $H_M(i \rightarrow j)$ to denote this hitting time in the Markov chain whose probability transition matrix is $M$.  By convention, $H_M(i \rightarrow i)=0$ for all $i$. We will use the notation $H_M$ to denote the matrix whose $i,j$'th element is $H_M(i \rightarrow j)$. For a comprehensive treatement of hitting times, the reader may consult the recent textbook \cite{lpw}.

With the above definitions in place, our next theorem contains our main result on steady-state disagreement. We remind the reader that, in addition to the stated assumptions of the theorem, we are also assuming that $P$ is a stochastic, irreducible, and aperiodic matrix for the remainder of the paper.

\bigskip

\begin{theorem} (An Explicit Expression for Weighted Steady-State Disagreement) If the Markov chain with transition matrix $P$ is reversible, then \[ \delta_{\rm ss}(P, \Sigma_w)  =  \pi^T H_{P^2} D_{\rm \pi} \Sigma_w D_{\rm \pi} \1 - {\rm Tr}  (H_{P^2}  D_{\rm \pi} \Sigma_{w} D_{\rm \pi}). \] \label{mainthm}
\end{theorem}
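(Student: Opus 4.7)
The plan is to reduce the problem to a discrete Lyapunov equation, solve it as a geometric sum, and then use reversibility together with the classical hitting-time/fundamental-matrix identity to rewrite the answer in the stated form.

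First I would derive the error dynamics. With the rank-one matrix $J := \1 \pi^T$, one verifies the projector-type identities $PJ=JP=J$ and $J^2=J$, and a direct computation starting from Eq.~(\ref{noisecons}) produces $e(t+1) = (P-J)\,e(t) + (I-J)\,w(t)$. Writing $\tilde P := P-J$, this drives the Lyapunov recursion $\Sigma_e(t+1) = \tilde P\, \Sigma_e(t)\, \tilde P^T + (I-J)\Sigma_w(I-J)^T$ for $\Sigma_e(t) := E[e(t)e(t)^T]$. Irreducibility and aperiodicity of $P$ put the spectrum of $\tilde P$ strictly inside the unit disk, so $\Sigma_e(t)$ converges to $\Sigma_e = \sum_{k=0}^\infty \tilde P^k(I-J)\Sigma_w(I-J)^T(\tilde P^T)^k$, and $\delta_{\rm ss} = {\rm Tr}(D_\pi \Sigma_e)$.

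Next I would bring in reversibility. The small algebraic identity $\tilde P^k(I-J) = P^k - J$, valid for all $k\ge 0$, collapses each summand to $(P^k-J)\Sigma_w(P^k-J)^T$. Then $D_\pi P = P^T D_\pi$ gives $(P^T)^k D_\pi P^k = D_\pi P^{2k}$, while the three cross-terms involving $J$ each reduce to $\pi\pi^T$ using $P\1=\1$ and $\pi^T P = \pi^T$. Cycling the trace yields
\[
\delta_{\rm ss} = {\rm Tr}\bigl(D_\pi\, Z_{P^2}\, \Sigma_w\bigr), \qquad Z_M := \sum_{k=0}^\infty (M^k - J),
\]
where $Z_{P^2}$ is the fundamental matrix of the chain $P^2$ (whose stationary distribution is still $\pi$, and which is itself ergodic).

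Finally I would convert $Z_{P^2}$ into $H_{P^2}$. The first-step equations for hitting times give $(I-M)H_M = \1\1^T - D_\pi^{-1}$, equivalently $(I-M)H_M D_\pi = J - I$, while by construction $(I-M)Z_M = I-J$. Hence $Z_M + H_M D_\pi$ lies in the right kernel of $I-M$, so it has the form $\1 d_M^T$; matching diagonals identifies $(d_M)_j = (Z_M)_{jj}$, and the normalization $\pi^T Z_M = 0$ (immediate from $\pi^T M^k = \pi^T = \pi^T J$) pins this down to $d_M^T = \pi^T H_M D_\pi$. Substituting $Z_{P^2} = \1\pi^T H_{P^2} D_\pi - H_{P^2} D_\pi$ into the trace and using $\pi = D_\pi \1$ with one more cyclic permutation yields the stated formula.

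The main obstacle, I expect, is arranging reversibility to convert the two-sided product $P^k(\cdot)(P^T)^k$ into the one-sided $P^{2k}$: that is what makes the geometric sum recognizable as a fundamental matrix, and without it there is no clean hitting-time interpretation of the answer. The rest is bookkeeping around the $\pi\pi^T$ cross-term cancellations and the standard fundamental-matrix/hitting-time identity; the reversibility step is the only place where structural content enters.
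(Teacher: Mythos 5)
Your proposal is correct and follows essentially the same route as the paper: the same error recursion and geometric series for the covariance, the same use of reversibility ($D_\pi P = P^T D_\pi$) to collapse the two-sided product into powers of $P^2$, and the same null-space argument pairing $(I-P^2)Z_{P^2} = I-J$ with the first-step hitting-time identity, with the rank-one correction pinned down by $\pi^T Z_{P^2}=0$. The only difference is presentational but worth noting: your $Z_{P^2}$ identifies the paper's auxiliary matrix $\Sigmah = Z_{P^2}\,\Sigma_w D_\pi$ as (the fundamental matrix of the chain $P^2$) times $\Sigma_w D_\pi$, which supplies exactly the conceptual interpretation the paper says it lacks for $\Sigmah$.
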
  The theorem characterizes $\delta_{\rm ss}$ in terms of combinatorial quantities associated with an underlying Markov chain, namely the stationary distribution and the hitting times. Inspecting the theorem, we note that it expresses $\delta_{\rm ss}$ in terms of a difference of two linear combinations of entries of the matrix $H_{P^2} D_{\pi} \Sigma_w D_{\pi}$, both with nonnegative coefficients which add up to $n$. 

Furthermore, this theorem captures the intuition that not all noises are created equal, in the sense that noise at key locations should have a  higher contribution to the limiting disagreement. Indeed, in the event that noises at different nodes are uncorrelated, the second term of Theorem \ref{mainthm} is easily seen to be zero (since the matrix $H_{P^2}$ has zero diagonal by definition) and we obtain \begin{equation} \label{uceq} \delta_{\rm ss}\left(P, {\rm diag} \left(\sigma_1^2, \ldots, \sigma_n^2 \right) \right) = \sum_{i=1}^n \sum_{j=1}^n \sigma_i^2 \pi_i^2 \pi_j H_{P^2} (j \rightarrow i). \end{equation} We see that in this case $\delta_{\rm ss}$ is a linear combination of the variances at each node, where the variance $\sigma_i^2$ multiplied by $\pi_i^2 \sum_{j=1}^n \pi_j H_{P^2}(j \rightarrow i)$. Note that this multiplier  is a product of a measure of importance coming from the stationary distribution (i.e., $\pi_i^2$) and a measure of the ``mean accessibility'' of a node (i.e., $\sum_{j=1}^n \pi_j H_{P^2}(j \rightarrow i)$).


In the event that all noises have the same variance, we obtain the simplified version 
\begin{equation} \label{samevar} \delta_{\rm ss}\left(P, \sigma^2 I \right)  = \sigma^2  \sum_{i=1}^n \sum_{j=1}^n  \pi_i^2 \pi_j H_{P^2} (j \rightarrow i). \end{equation} As we discuss later in this paper, for many classes of matrices $P$ the quantity $\sum_{i=1}^n \sum_{j=1}^n  \pi_i^2 \pi_j H_{P^2} (j \rightarrow i)$ grows with the 
total dimension of the system $n$. In other words, although the system is technically stable in the sense of having bounded expected disagreement as $t \rightarrow \infty$, this stability is almost meaningless if $n$ is large. Equations (\ref{uceq}) and (\ref{samevar}) allow us to determine when this is the case by analyzing how stationary distribution and hitting times grow on various kinds of graphs\footnote{In particular, an implication is that the the amount of noise amplification in the network $\delta_{\rm ss}(P, \sigma^2 I)$ is not fully characterized by the spectral gap of the underlying graph; see, for example, the table in Section \ref{examples}.}. Later in the paper (in Section \ref{examples}) we will use these equations to work out how $\delta_{\rm ss}$ scales for a variety of matrices $P$ which come from graphs.

\subsection{Proof of Theorem \ref{mainthm}}

We now turn to the proof of Theorem \ref{mainthm}. We begin with an informal sketch of the main idea. 

First, the disagreement $\delta_{\rm ss}^{\rm uni}$ may be thought of as proportional to the trace of a certain asymptotic covariance matrix $\Sigma_{\rm ss}$ (to be formally defined later), whereas the weighted disagreement $\delta_{\rm ss}$ may be thought of as the trace of $D_{\rm \pi} \Sigma_{\rm ss}$ (since multiplication by the diagonal matrix $D_{\rm \pi}$ scales the $i$'th diagonal entry of covariance matrix $\Sigma_{\rm ss}$ by $\pi_i$).  Now the key observation is that we can write down a matrix $\Sigmah$ with the properties that (i) the trace of $\Sigmah$ is the same as the trace of $D_{\rm \pi} \Sigma_{\rm ss}$ (ii) the difference between the matrix $\Sigmah$ and the matrix $H D_{\rm \pi} \Sigma_{\rm w} D_{\rm \pi}$ is a matrix with constant columns. These two properties allow us to relate $\delta_{\rm ss}$ to the matrix $H D_{\rm \pi} \Sigma_{\rm w} D_{\rm \pi}$ and very quickly lead to the proof of Theorem \ref{mainthm}. 

The existence of such a matrix $\Sigmah$ is a technical observation and we are not aware of any intuitive explanation or justification for it. As a consequence, the proof given next is not very intuitive and largely composed of the manipulation of equations.

  Specifically, we begin by deriving a recursion for the error covariance matrix at time $t$ and show that, after a large number of equation manipulations, in the limit as $t \rightarrow \infty$ it leads to a certain representation of $D_{\rm \pi} \Sigma_{\rm ss}$ as an infinite sum; we then rearrange some parts of this sum to define the matrix $\Sigmah$; finally, we show that the matrix $\Sigmah$ thus defined has the properties (i) and (ii) above and immediately deduce Theorem \ref{mainthm}. 

We now begin the proof itself. The matrix $J$ defined as $$J := \1 \pi^T,$$ will be of central importance to the proof; the following lemma collects a number of its useful properties. 

\begin{lemma}[Properties of the Matrix $J$] \label{jprop} \begin{eqnarray*} \x(t) & = & J x(t), \\ 
J \1 & = & \1, \\ 
J P & = & J, \\
P J & = & J, \\
J^2 & = & J, \\
(I-J)^2 & = & I - J, \\
(P^l-J)^k & = & P^{lk} - J, \mbox{          } l = 0,1,2, \ldots,  \mbox{  and     } k = 1,2,\ldots\\
\rho(P-J) & < & 1.
\end{eqnarray*} 
\end{lemma}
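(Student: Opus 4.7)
The plan is to derive every identity from the three defining facts $P\1 = \1$ (stochasticity), $\pi^T P = \pi^T$ (stationarity), and $\pi^T \1 = 1$ (normalization), together with associativity of matrix multiplication. The first line, which should read $\overline{x}(t) = J x(t)$, is just a restatement of the definition of $\overline{x}(t)$, since $J x(t) = \1(\pi^T x(t)) = \big(\sum_i \pi_i x_i(t)\big)\1$. Then $J\1 = \1(\pi^T \1) = \1$; $JP = \1(\pi^T P) = \1\pi^T = J$; $PJ = (P\1)\pi^T = \1\pi^T = J$; and $J^2 = \1(\pi^T \1)\pi^T = J$. The idempotency $(I-J)^2 = I - J$ then drops out of $J^2 = J$ after expanding the square.

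For the generalized idempotency $(P^l - J)^k = P^{lk} - J$ I would proceed by induction on $k$. Iterating $PJ = JP = J$ gives $P^l J = J P^l = J$ for every $l \ge 0$, so every cross-term involving $J$ collapses to $J$. The base case $k=1$ is trivial, and for the inductive step I would expand
\[
(P^l - J)^{k} = (P^l - J)\bigl(P^{l(k-1)} - J\bigr) = P^{lk} - P^l J - J P^{l(k-1)} + J^2,
\]
and replace each of $P^l J$, $J P^{l(k-1)}$, and $J^2$ by $J$; the three $J$ terms combine into $-J$, giving $P^{lk} - J$ as required.

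The only item with genuine content is the spectral-radius bound $\rho(P - J) < 1$. My plan is to bootstrap off the previous identity with $l = 1$, which gives $(P - J)^k = P^k - J$ for all $k \ge 1$. Under the paper's standing assumption that $P$ is stochastic, irreducible, and aperiodic, the standard convergence theorem for finite Markov chains yields $P^k \to \1\pi^T = J$, so $(P-J)^k \to 0$ in any matrix norm. Gelfand's formula $\rho(M) = \lim_k \|M^k\|^{1/k}$ then forces $\rho(P - J) < 1$.

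The main (and really only) obstacle is the spectral-radius bound, and even there the substantive content is the Perron--Frobenius convergence $P^k \to J$, which is standard under the blanket hypotheses and can simply be invoked; everything else amounts to routine manipulation of the rank-one matrix $J = \1\pi^T$.
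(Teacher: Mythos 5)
Your proof is correct. For the first seven identities you do exactly what the paper does: read the first six off the rank-one structure $J=\1\pi^T$ together with $P\1=\1$, $\pi^TP=\pi^T$, $\pi^T\1=1$, and establish the generalized idempotency by the same induction on $k$ (collapsing the cross-terms via $P^lJ=JP^l=J^2=J$). The only place you genuinely diverge is the spectral-radius bound. The paper argues directly on the spectrum: it shows that any eigenvector of $P-J$ with nonzero eigenvalue $\lambda$ satisfies $\pi^Tv=0$, hence is an eigenvector of $P$ itself, so $|\lambda|\le 1$ by stochasticity, and then rules out $|\lambda|=1$ because irreducibility and aperiodicity force the peripheral eigenvector of $P$ to be $\1$, which $P-J$ annihilates. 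You instead invoke the Markov-chain convergence theorem $P^k\to\1\pi^T=J$, note $(P-J)^k=P^k-J\to 0$, and conclude $\rho(P-J)<1$. That is a valid alternative; the two arguments are of comparable depth since the convergence theorem is itself usually proved from the same spectral facts, but yours lets you cite a black box while the paper's is self-contained. One small precision point: Gelfand's formula alone does not quite deliver strict inequality from $\|M^k\|\to 0$ (a sequence like $1/k$ tends to $0$ while $(1/k)^{1/k}\to 1$); the clean justification is $\rho(M)^k=\rho(M^k)\le\|M^k\|\to 0$, which forces $\rho(M)<1$. With that one-line fix your argument is airtight.
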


\begin{proof} The first six equations are immediate consequences of the definitions of $J$, $P$, and $\pi$.  The seventh equation can be established by induction. Indeed, the base case of $k=1$ is trivial. If the identity is established for some $k$, then  
\begin{eqnarray*} (P^l-J)^{k+1} & = &  (P^l-J)(P^l-J)^k \\ & = &  (P^l-J)(P^{lk} - J) \\  & = & P^{l(k+1)} - P^l J - JP^{lk} + J^2 \\ & = &  P^{l(k+1)} - J. 
\end{eqnarray*} Note that some care is needed in applying the seventh equation as it is obviously false when $k=0$.

To prove the final inequality suppose that for some vector $v \in \mathbb{C}^n$ and some $\lambda \in \mathbb{C}$, $$(P-J) v = \lambda v.$$ If $\lambda \neq 0$, then \begin{small} \[ \pi^T v =  \pi^T P v =  \pi^T (P-J) v + \pi^T J v = \lambda \pi^T v + \pi^T v = (1 + \lambda) \pi^T v \] \end{small} which implies that $\pi^T v = 0$. In turn, this implies that $Jv=0$ and consequently $v$ is an eigenvector of $P$ with eigenvalue $\lambda$. By stochasticity of $P$, this  implies $ |\lambda| \leq 1$. 

To show the strict inequality, observe that  since the matrix $P$ is irreducible and aperiodic, we have that it has only one eigenvector with an eigenvalue that has absolute value $1$ and that is the all-ones vector $\1$. Thus if $|\lambda| = 1$ then the vector $v$ is a multiple of $\1$; however, ${\bf 1}$ is an eigenvalue of $P-J$ with eigenvalue zero which contradicts $|\lambda|=1$. We conclude  that if $\lambda \neq 0$ then $|\lambda| < 1$, which is what we needed to show. 
\end{proof}

\medskip

Next, we define the matrix  
$$\Sigma(t) := E[ e(t) e(t)^T].$$ 

The following lemma derives a recursion satisfied by $\Sigma(t)$. 

\begin{lemma}[Simplified Recursion for the Covariance Matrix] \label{srecur} \begin{eqnarray}  \Sigma(t+1) & = & (P-J) \Sigma(t) (P-J)^T +  (I - J) \Sigma_{w} (I-J)^T. \nonumber 
\label{onestepupdate}
\end{eqnarray} 
\end{lemma}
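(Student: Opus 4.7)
The plan is to derive the recursion by first expressing $e(t)$ as a simple linear function of $x(t)$, pushing the update through, and then using independence to take expectations.

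First I would rewrite the error as $e(t) = (I-J)x(t)$, which holds because $\overline{x}(t) = (\pi^T x(t)) \1 = \1 \pi^T x(t) = J x(t)$ directly from the definition of $J$. Combining this with the noisy update $x(t+1) = Px(t) + w(t)$ gives
\[ e(t+1) = (I-J) P x(t) + (I-J) w(t). \]
Using the identity $JP = J$ from Lemma \ref{jprop}, we have $(I-J)P = P - J$, so
\[ e(t+1) = (P-J) x(t) + (I-J) w(t). \]

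Next I would replace $x(t)$ by $e(t) + \overline{x}(t)$ and show that $(P-J)\overline{x}(t) = 0$. This follows because $\overline{x}(t)$ is a scalar multiple of $\1$, and both $P\1 = \1$ (by stochasticity of $P$) and $J\1 = \1$ (from Lemma \ref{jprop}) imply $(P-J)\1 = 0$. Hence
\[ e(t+1) = (P-J) e(t) + (I-J) w(t). \]

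Finally I would form the outer product $e(t+1) e(t+1)^T$ and take expectations. The two cross terms are $(P-J) e(t) w(t)^T (I-J)^T$ and its transpose; since $e(t)$ is a deterministic function of $w(0), \ldots, w(t-1)$ and $w(t)$ is zero-mean and independent of these, the expectations of both cross terms vanish. What remains is
\[ \Sigma(t+1) = (P-J)\, \Sigma(t)\, (P-J)^T + (I-J)\, \Sigma_w\, (I-J)^T, \]
which is exactly the claimed recursion. The only step that requires any care is confirming that the cross terms vanish, but this is immediate from the i.i.d.\ zero-mean assumption on $w(t)$; I do not anticipate any real obstacle here.
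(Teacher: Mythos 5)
Your proposal is correct and follows essentially the same route as the paper: derive $e(t+1) = (P-J)e(t) + (I-J)w(t)$ using $JP = J$ and $(P-J)\1 = 0$, then take the expectation of the outer product and kill the cross terms via the zero-mean, independent-in-time noise assumption. The only cosmetic difference is that you make the step $(P-J)\overline{x}(t)=0$ explicit where the paper leaves it implicit.
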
 

\begin{proof} Indeed, using Lemma \ref{jprop},
\begin{eqnarray*} e(t+1) & = & x(t+1) - J x(t+1) \\ 
& = & P x(t) + w(t) - JP x(t) - J w(t) \\ 
& = & (P-J) x(t) + (I-J) w(t) \\ 
& = & (P-J) (x(t) - \overline{x}(t)) + (I-J) w(t) \\
& = & (P-J) e(t) + (I-J) w(t), 
\end{eqnarray*} and therefore,  \begin{small}
\begin{eqnarray*} \Sigma(t+1) &  = & E[~e(t+1) e(t+1)^T] \\ & = & E \left[ \left( (P-J) e(t)  \right. \right. \\ && \left. \left. + (I-J) w(t) \right)  \left( e(t)^T (P-J)^T + w(t)^T (I-J)^T \right) ~\right], 
\end{eqnarray*} \end{small} and finally since $E[e(t) w(t)^T] = E[w(t) e(t)^T] =0$, this immediately implies the current lemma. 
\end{proof}

\smallskip

Observe that \[ \delta_{\rm ss} = \lim \sup_{t \rightarrow \infty} \sum_{i=1}^n \pi_i [\Sigma(t)]_{ii}. \] As a consequence of Lemma \ref{srecur}, it is not hard to see that the initial condition $x(0)$ has no influence on $\delta_{\rm ss}$. Indeed, using $\Sigma^0(t)$ to denote what $\Sigma(t)$ would be if $x(0)=0$ we  have that  
\[ \Sigma(t) = \Sigma_0(t) + (P-J)^t e(0)  e(0)^T \left( (P-J)^T \right)^{t}. \] Since $\rho(P-J)<1$ by Lemma \ref{jprop},  we see that $\Sigma(t) - \Sigma^0(t) \rightarrow 0$. Using $\delta_{\rm ss}^0$ to denote what $\delta_{\rm ss}$ would be if $x(0)=0$, we have that 
\begin{eqnarray*} \delta_{\rm ss} - \delta_{\rm ss}^0  &= &   \lim \sup_{t \rightarrow \infty} \left( \pi_i [\Sigma^0(t)]_{ii} + \pi_i [\Sigma(t) - \Sigma^{0}(t)]_{ii} \right) \\ && - \lim \sup_{t \rightarrow \infty}  \pi_i [\Sigma^0(t)]_{ii}  \\ & = &  0.
\end{eqnarray*}

{\em Thus for the remainder of this paper, we will make the assumption that $x(0)=0$, i.e., that the initial condition is the origin.} This assumption will slightly simplify some of the expressions which follow. 

In our next corollary, we write down an explicit expression for $\Sigma(t)$ as an infinite sum.

\begin{corollary}[Explicit Expression for the Covariance Matrix]  For $t \geq 1$, \[ \Sigma(t) =\sum_{k=0}^{t-1} (P^k - J) \Sigma_{ w} ((P^T)^k - J^T). \]
\end{corollary}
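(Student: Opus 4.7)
The plan is to prove the formula by induction on $t$, using the one-step recursion from Lemma \ref{srecur} together with the algebraic identities for $J$ collected in Lemma \ref{jprop}. Before starting, I note that the assumption $x(0)=0$ implies $e(0)=0$, so $\Sigma(0)$ is the zero matrix, which anchors the induction cleanly.

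For the base case $t=1$, I would plug $\Sigma(0)=0$ into the recursion of Lemma \ref{srecur} to obtain $\Sigma(1)=(I-J)\Sigma_w(I-J)^T$, which coincides with the single-term sum $(P^0-J)\Sigma_w((P^T)^0-J^T)$ on the right-hand side.

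For the inductive step, assuming the formula holds at time $t$, I would substitute the sum expression into the recursion:
\[ \Sigma(t+1) = \sum_{k=0}^{t-1}(P-J)(P^k-J)\,\Sigma_w\,((P^T)^k-J^T)(P-J)^T + (I-J)\Sigma_w(I-J)^T. \]
The key algebraic step is to show that $(P-J)(P^k-J) = P^{k+1}-J$ for every $k\ge 0$. Expanding gives $P^{k+1} - PJ - JP^k + J^2$, and Lemma \ref{jprop} tells us that $PJ=J$, $J^2=J$, and $JP^k = J$ (by iterating $JP=J$, with the case $k=0$ also immediate). The three stray $J$ terms collapse to a single $-J$, yielding $P^{k+1}-J$. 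Taking the transpose of the same identity handles the right factor. After this simplification the sum becomes
\[ \sum_{k=0}^{t-1}(P^{k+1}-J)\,\Sigma_w\,((P^T)^{k+1}-J^T) + (I-J)\Sigma_w(I-J)^T, \]
and reindexing $j=k+1$ while absorbing the isolated $(I-J)\Sigma_w(I-J)^T$ as the $j=0$ term produces $\sum_{j=0}^{t}(P^j-J)\Sigma_w((P^T)^j-J^T)$, which is the claimed expression at time $t+1$.

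There is no real obstacle here; the entire argument is bookkeeping once one observes the identity $(P-J)(P^k-J)=P^{k+1}-J$. I note in passing that the seventh identity in Lemma \ref{jprop}, namely $(P^l-J)^k = P^{lk}-J$, is a special case of the same idea but is not directly what one needs; the above one-sided telescoping is the relevant fact, and it follows from exactly the same three properties $PJ=J$, $JP=J$, $J^2=J$.
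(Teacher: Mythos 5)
Your proof is correct and is essentially the same argument as the paper's: both start from the recursion of Lemma \ref{srecur} with $\Sigma(0)=0$ and use the identities $PJ=J$, $JP=J$, $J^2=J$ from Lemma \ref{jprop} to collapse the powers of $P-J$ into $P^k-J$. The only cosmetic difference is that you organize the computation as an induction via the one-sided identity $(P-J)(P^k-J)=P^{k+1}-J$, whereas the paper first unrolls the recursion into $\sum_k (P-J)^k(I-J)\Sigma_w(I-J)^T((P-J)^T)^k$ and then simplifies using $(P-J)^k=P^k-J$ together with $(P^k-J)(I-J)=P^k-J$.
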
 

\begin{proof} Indeed, as we are now assuming that $x(0)=0$, Lemma \ref{srecur} implies that for $t \geq 1$,  \begin{small}
\begin{align} \Sigma(t) & =   \sum_{k=0}^{t-1} (P-J)^k (I-J) \Sigma_{w} (I-J)^T (P^T - J^T)^k \nonumber \\ 
& =   (I-J)D\Sigma_{w} (I-J^T) \nonumber \\ & +  \sum_{k=1}^{t-1} (P^k - J) (I - J)\Sigma_{w} (I- J)^T ((P^T)^k - J^T) \label{sigmaeq}
\end{align} \end{small} where the last line used Lemma \ref{jprop} for the equality $(P-J)^k = P^k - J$ when $k \geq 1$.

Next, observing that by
Lemma \ref{jprop}, again if $k \geq 1$, \[ (P^k - J) J = (P-J)^k J  = (P-J)^{k-1} (P-J) J =  0\] and therefore if $k \geq 1$, \begin{footnotesize}
\[ (P^k - J)(I-J)  \Sigma_{w}  (I-J)^T ((P^T)^k-J^T) = (P^k - J)  \Sigma_{w}  ((P^T)^k - J^T). \]   \end{footnotesize} Plugging this into Eq. (\ref{sigmaeq}),  we obtain the statement of the corollary.
\end{proof} 

\medskip

Appealing once again to Lemma \ref{jprop}, we may rewrite the previous corollary as 
\[ \Sigma(t) = (I-J) \Sigma_{\rm w}(I-J)^T +  \sum_{k=1}^{t-1} (P-J)^k \Sigma_{\rm w} ((P-J)^T)^k.  \]
Furthermore, by Lemma \ref{jprop} the matrix  $P-J$ has spectral radius strictly less than $1$. It follows that we can define  
\begin{equation} \label{firstsig}  \Sigma_{\rm ss} :=  (I - J) \Sigma_{\rm w}(I-J)^T +  \sum_{k=1}^{\infty} (P-J)^k \Sigma_{\rm w}  ((P-J)^T)^k, \end{equation} and this is a valid definition since the  
the sum on the right-hand side converges. Moreover,  \[ \Sigma_{\rm ss} = \lim_{t \rightarrow \infty} \Sigma(t). \]

Our next step is to observe that if we define $D_{\pi} := {\rm diag}(\pi_1, \pi_2, \ldots, \pi_n)$, then the quantity $\delta_{\rm ss}$ we are seeking to characterize can be written as 
\begin{equation} \label{treq} \delta_{\rm ss} = {\rm Tr} (\Sigma_{\rm ss} D_{\pi} ). \end{equation}

We therefore now turn our attention to the matrix $\Sigma_{\rm ss} D_{\pi}$. Our next lemma derives an explicit expression for this matrix as an infinite sum.  The proof of this lemma is the 
only place in the proof of Theorem \ref{mainthm} where we use the reversibility of the matrix $P$.

\begin{lemma}[Explicit Expression for the Weighted Covariance Matrix] \begin{small}  \[ \Sigma_{\rm ss} D_{\pi} =  (I-J) \Sigma_{\rm w}D_{\pi} (I-J) + \sum_{k=1}^{\infty} (P-J)^k \Sigma_{\rm w} D_{\pi} (P - J)^k. \] \end{small} \label{sdlemma}
\end{lemma}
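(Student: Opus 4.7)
The plan is to start from the expression for $\Sigma_{\rm ss}$ given in Eq. (\ref{firstsig}), multiply both sides on the right by $D_{\pi}$, and then use reversibility to convert each factor of $(P-J)^T$ on the right of $\Sigma_w$ into a factor of $(P-J)$ acting after $D_\pi$. This essentially lets us ``move $D_\pi$ to the middle.''

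The key algebraic identity is that, under reversibility,
\[ (P-J)^T D_\pi = D_\pi (P-J). \]
To see this, I would verify the two pieces separately. Reversibility says $\pi_i P_{ij} = \pi_j P_{ji}$, which in matrix form is $P^T D_\pi = D_\pi P$. Separately, regardless of reversibility, $J^T D_\pi = \pi \mathbf{1}^T D_\pi = \pi \pi^T$ and $D_\pi J = D_\pi \mathbf{1} \pi^T = \pi \pi^T$, so $J^T D_\pi = D_\pi J$. Subtracting gives the displayed identity. The same argument applied to $I-J$ in place of $P-J$ shows $(I-J)^T D_\pi = D_\pi (I-J)$.

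A short induction then extends the identity to all powers: if $((P-J)^T)^k D_\pi = D_\pi (P-J)^k$, then
\[ ((P-J)^T)^{k+1} D_\pi = ((P-J)^T)^k (P-J)^T D_\pi = ((P-J)^T)^k D_\pi (P-J) = D_\pi (P-J)^{k+1}. \]
Consequently, for every $k \geq 1$,
\[ (P-J)^k \Sigma_w ((P-J)^T)^k D_\pi = (P-J)^k \Sigma_w D_\pi (P-J)^k, \]
and similarly $(I-J) \Sigma_w (I-J)^T D_\pi = (I-J) \Sigma_w D_\pi (I-J)$.

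Finally, I would multiply Eq. (\ref{firstsig}) on the right by $D_\pi$, substitute these two commutation identities term by term (the convergence of the infinite sum is already established in the discussion leading up to Eq. (\ref{firstsig}), so interchanging multiplication by $D_\pi$ with the sum is immediate), and obtain the claimed expression. There is no serious obstacle; the only nontrivial content is recognizing that reversibility, together with the rank-one structure of $J$, is precisely what is needed to commute $D_\pi$ past $(P-J)^T$. This is also the unique place in the overall proof of Theorem \ref{mainthm} where reversibility is used, which matches the remark preceding the lemma.
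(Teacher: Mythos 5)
Your proposal is correct and follows essentially the same route as the paper: both start from Eq. (\ref{firstsig}), right-multiply by $D_\pi$, and use the reversibility identity $D_\pi P = P^T D_\pi$ together with $D_\pi J = J^T D_\pi$ to commute $D_\pi$ past the transposed factors. The only difference is that you make the induction on powers and the computation $J^T D_\pi = D_\pi J = \pi\pi^T$ explicit, which the paper leaves implicit.
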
 
\begin{proof} Indeed, from Eq. (\ref{firstsig}), 
\begin{equation} \Sigma_{\rm ss} D_{\pi} =  (I-J) \Sigma_{\rm w} (I-J)^T D_{\pi} +  \sum_{k=1}^{\infty} (P-J)^k \Sigma_{\rm w} (P^T - J^T)^k D_{\pi} \label{firstsd} \end{equation} Now the reversibility of $P$ means that for all $i,j = 1, \ldots, n$, we have that $\pi_i P_{ij} = \pi_j P_{ji}$. We can write this in matrix form as  
\[ D_{\pi} P = P^T D_{\pi}. \] One can also verify directly from the definitions of $J$ and $D_{\pi}$ that 
\[ D_{\pi} J = J^T D_{\pi}. \] Plugging the last two equations into Eq. (\ref{firstsd}), we obtain the statement of the lemma. 
%
\end{proof} 

\medskip

{\bf Definition of the matrix $\Sigmah$:} we would now like to introduce the matrix  $\Sigmah$ defined as 
\begin{equation} \label{shdef} \Sigmah :=  \sum_{k=0}^{\infty} (P^{2k}-J) \Sigma_{\rm w} D_{\pi}. \end{equation} As before, by Lemma \ref{jprop} we have that $\rho(P-J)<1$, and consequently the sum on the right hand side converges and $\Sigmah$ is well defined. Furthermore, since ${\rm Tr}(AB) = {\rm Tr}(BA)$, Lemma \ref{sdlemma} immediately implies that \[ {\rm Tr}(\Sigmah) = {\rm Tr}(\Sigma_{\rm ss} D_{\pi}), \] and putting this together with Eq. (\ref{treq}), we have \begin{equation} \label{shtr} {\rm Tr}(\Sigmah) = \delta_{\rm ss}. \end{equation} Furthermore, since by Lemma \ref{jprop} we have that  $J(P^k-J)=0$ for all $k \geq 0$, we have that 
\begin{equation} \label{jorth} J \Sigmah = 0. \end{equation} Finally,  using Eq. (\ref{jorth}), followed by Eq. (\ref{shdef}) and Lemma \ref{jprop}, we have the following sequence of equations:
\begin{small} \begin{eqnarray*} P^2 \Sigmah & = & (P^2-J) \Sigmah \\ 
& = & \sum_{k=0}^{\infty} (P^2 - J) (P^{2k}-J) \Sigma_{\rm w}D_{\pi} \\ 
& = & (P^2 - J) (I-J) \Sigma_{\rm w} D_{\pi} +  \sum_{k=1}^{\infty} (P^2 - J) (P^2 - J)^k  \Sigma_{\rm w} D_{\pi} \\
& = &  (P^2 - J) \Sigma_{\rm w} D_{\pi} + \sum_{k=1}^{\infty} (P^{2(k+1)} - J) \Sigma_{\rm w} D_{\pi} \\
& = &  \sum_{k=0}^{\infty} (P^{2(k+1)} - J) \Sigma_{\rm w} D_{\pi} \\ 
& = &  \sum_{k=1}^{\infty} (P^{2k} - J) \Sigma_{\rm w} D_{\pi} \\ 
& = & \Sigmah - (I-J) \Sigma_{\rm w} D_{\pi} 
\end{eqnarray*} \end{small} which we may rearrange as 
\begin{equation} \label{sheq} \Sigmah = P^2 \Sigmah +  (I-J) \Sigma_{\rm w} D_{\pi} 
\end{equation}

\bigskip

With these identities in place, we are finally ready to prove Theorem \ref{mainthm}. 

\medskip

\begin{proof}[Proof of Theorem \ref{mainthm}] Let us stack up the hitting times in the Markov chain which moves according to $P^2$ in the matrix $H$, i.e., $H_{ij} := H_{P^2}(i \rightarrow j)$. By conditioning on what happens after a single step, we have the usual identity    
\[ H_{ij} = 1 + \sum_{k=1}^n [P^2]_{ik} H_{kj} , ~~~~~~~~~ i \neq j. \] On the other hand, since a random walk spends an expected $1/\pi_i$ steps in between visits to node $i$, \[ H_{ii} = 0 = 1 + \sum_{i=1}^n [P^2]_{ik} H_{ki} - \frac{1}{\pi_i}. \] We can the previous two equations in matrix form together as
\[ H = \1 \1^T + P^2 H - D_{\pi}^{-1} , \] or
\[  (I-P^2) H = \1 \1^T - D_{\rm \pi}^{-1}.   \] Multiplying both sides of this equation by $D_{\rm \pi} \Sigma_{\rm w} D_{\rm \pi} $ on the right, we  obtain   \begin{equation} \label{heq} (I - P^2) H D_{\rm \pi}^2 D_{\sigma^2} = (J - I) \Sigma_{\rm w} D_{\pi} . \end{equation} On the other hand, observe that we may rearrange Eq. (\ref{sheq}) as 
\begin{equation} \label{seq}  (I - P^2) \Sigmah =   (I - J) \Sigma_{\rm w}D_{\pi} . \end{equation}

Adding Eq. (\ref{heq}) and Eq. (\ref{seq}), we obtain \[ \left( I - P^2 \right) \left( \Sigmah + H D_{\rm \pi} \Sigma_{\rm w} D_{\rm \pi} \right) = 0,\] meaning that all the columns $ \Sigmah + H D_{\rm \pi} \Sigma_{\rm w} D_{\rm \pi}$ lie in the null space of $I - P^2$. But because $P$ is irreducible and aperiodic, the null space of $I-P^2$ is ${\rm span} \{\1\}$. Thus $\Sigmah + H D_{\rm \pi} \Sigma_{\rm w} D_{\rm \pi}$ is a matrix with constant 
columns. In other words, there exists a vector $v$ such that 
\begin{equation} \label{solved} \Sigmah= -  H D_{\rm \pi} \Sigma_{\rm w} D_{\rm \pi} + \1 v^T. \end{equation} 

We can, in fact, compute $\1 v^T $ exactly by utilizing Eq. (\ref{jorth}), which implies that 
\[ \1 \pi^T H D_{\pi}^2  D_{\sigma^2}= \1 v^T. \] Plugging this this into Eq. (\ref{solved}), we obtain  
\begin{equation} \label{sigmaheq} \widehat{\Sigma} =  - H  D_{\rm \pi} \Sigma_{\rm w} D_{\rm \pi}+ \1 \pi^T  H D_{\rm \pi} \Sigma_{\rm w} D_{\rm \pi}.  \end{equation}

Finally recalling that $\delta_{\rm ss}$ is the trace of $\Sigmah$ (see Eq. (\ref{shtr})),  
\[ \delta_{\rm ss} = - {\rm Tr}  (H  D_{\rm \pi} \Sigma_{\rm w} D_{\rm \pi}) + \pi^T H D_{\rm \pi} \Sigma_w \pi. \]
\end{proof}

\smallskip

Having proven  Theorem \ref{mainthm}, we conclude the section with a discussion of its simplifications in the case when $P$ is symmetric, followed by an enumeration of some
connections it implies between the weighted steady-state disagreement $\delta_{\rm ss}$, the unweighted steady-state disagreement $\delta_{\rm ss}^{\rm uni}$, and
other graph-theoretic quantities such as the electrical resistance and the Kemeny constant. 

\subsection{Simplifications of Theorem \ref{mainthm} in the symmetric case\label{sym.1}} In this subsection we collect several simplifications and observations that pertain to symmetric matrices $P$. {\em Thus for the remainder of this Section \ref{sym.1}, we will asume that $P$ is a symmetric matrix}. Some of the identities we derive in this brief subsection will be new, whereas others will be simple proofs of already known results. The main reason these results are collected here is that we will need to use some of them in Sections \ref{examples} and \ref{symmetric}.

 Since the symmetry of $P$ implies that $\pi = (1/n) \1$, we immediately obtain that \begin{equation} \label{symmdelta} \delta_{\rm ss}(P, \Sigma_w) = \frac{1}{n^3} \1^T H \Sigma_w \1 - \frac{1}{n^2} {\rm Tr} (H \Sigma_w).  \end{equation} Using the notation $\Sigma_w = [\sigma_{ij}]$ as well as the fact that $\Sigma_w$ is symmetric, we may expand this expression to obtain 
\begin{align} \label{expand} \delta_{\rm ss}(P, \Sigma_w) & =  \frac{1}{n^3} \sum_{i=1}^n \sum_{k=1}^n \sum_{l=1}^n H_{P^2}(k \rightarrow l)  \sigma_{li} \nonumber \\ & - \frac{1}{n^2} \sum_{i<j} \sigma_{ij} \left( H_{P^2}(i \rightarrow j) + H_{P^2} (j \rightarrow i) \right) \end{align} It is also worthwhile to rewrite this as 
\begin{eqnarray} \delta_{\rm ss}(P, \Sigma_w)  & = & \frac{1}{n^3} \left( {\rm Tr}( H \Sigma_w \1 \1^T) - {\rm Tr}(n H \Sigma_w) \right) \nonumber \\
& = & -\frac{1}{n^3} {\rm Tr} ( H \Sigma_w (nI - \1 \1^T ) ) \nonumber \\
& = & -\frac{1}{n^2} {\rm Tr} (H \Sigma_w  (I - (1/n) \1 \1^T)) \nonumber \\
& = & -\frac{1}{n^2} {\rm Tr} (H \Sigma_w P_{\1^\perp} ) \label{deltaperp}
\end{eqnarray} where $P_{\1^\perp} = I  - (1/n) \1 \1^T$ is the orthogonal projection matrix onto the subspace $\1^\perp$.

Equations (\ref{symmdelta}), (\ref{expand}), and (\ref{deltaperp}) are considerable simplifications of Theorem \ref{mainthm}. However, it is possible to  simplify Theorem \ref{mainthm} still further if we additionally assume that $\Sigma_w$ is diagonal, i.e., $\Sigma_w = {\rm Diag}(\sigma_1^2, \ldots, \sigma_n^2)$. In that case, the second term on the right of Eq. (\ref{symmdelta}) is zero and we obtain \begin{equation} \label{symdiag} \delta_{\rm ss}(P, {\rm diag}(\sigma_1^2, \ldots, \sigma_n^2)) = \frac{1}{n} \sum_{i=1}^n \sum_{k=1}^n \frac{\sigma_i^2 H_{P^2}(k \rightarrow i)}{n^2}. \end{equation} 

Finally, let us assume that the the variances are all identical, i.e., $\Sigma_w = \sigma^2 I$. In this case the answer can be written in a particularly simple form in terms of the so-called Kemeny constant.

\smallskip

\noindent {\bf Kemeny constant.} A classic result of Kemeny sometimes called the ``random target lemma'' shows that the quantity $\sum_{j=1}^n \pi_j H_M(i \rightarrow j)$ is independent of $i$ for any Markov chain $M$. The quantity $\sum_{j=1}^n \pi_j H_M(i \rightarrow j)$  is thus called the Kemeny constant of the Markov chain and we will denote it by $K(M)$. 

With this in mind, from Eq. (\ref{symdiag}) we have that
\begin{equation} \label{kemeny}  \delta_{\rm ss}(P, \sigma^2 I) = \sigma^2 \frac{K(P^2)}{n}. 
\end{equation}

Arguably, this is the simplest possible characterization of $\delta_{\rm ss}$ for symmetric matrices $P$ and $\Sigma_w = \sigma^2 I$. 

Moreover, we remark that this can be rewritten in terms of the eigenvalues of the matrix $P$. Indeed, defining $\Lambda(M)$ to be the set of all non-principal eigenvalues of $M$, it is known \cite{kem2, kirkland}   that  
\begin{equation} \label{keig} K(M) = \sum_{\lambda \in \Lambda(M)} \frac{1}{1 - \lambda}. \end{equation} Putting the last two equations together, we have that for symmetric $P$ with constant variances,
\[ \delta_{\rm ss}(P, \sigma^2 I) = \frac{\sigma^2}{n} \sum_{\lambda \in \Lambda(P)} \frac{1}{1 - \lambda^2}. \] This last identity is not a new result; rather, it was first observed in \cite{boyd} where it was  proved directly by diagonalizing $P$. 

\medskip

\noindent {\bf Electrical resistance.} We remark that it is possible to use Theorem \ref{mainthm} obtain a characterization of $\delta_{\rm ss}(P, \sigma^2 I)$ in terms of electric resistances as first shown in \cite{zmp} (see also \cite{bamieh2} for the analogous observation in continuous time). 

Given a reversible stochastic matrix $M \in \R^{n \times n}$ with zero diagonal, we define \[ q_M(x,y) := \pi_x M(x,y). \] Note that reversibility of $M$ implies that $q_M(x,y) = q_M(y,x)$. The quantity  $R_M ( a \leftrightarrow b)$ is defined to be the resistance from $a$ to $b$ in the electrical network where the edge $(i,j)$ is replaced with a resistor with resistance $1/q_M(i,j)$. 

There is a connection between resistances, thus defined, and hitting times:
\begin{equation} \label{hsum} H_M(i \rightarrow j) + H_M(j \rightarrow i) = R_{M} (i \leftrightarrow j).\end{equation} A proof may be found in Section 10.3 of \cite{lpw}.

Using this identity along with the symmetry of the matrix $P$ (which implies all $\pi_i$ equal $1/n))$,  we can group terms together in Eq. (\ref{symdiag}) to obtain 
\[ \delta(P, \sigma^2 I ) =  \frac{\sigma^2}{n} \frac{\sum_{i<j} R_{P^2}(i \leftrightarrow j)}{n^2}. \] As mentioned above, this identity was first proved in \cite{zmp}.

\subsection{Further connections to resistance, the Kemeny constant, and unweighted steady-state disagreement.}  We now turn our attention back to the case when $P$ is reversible (and not necessarily symmetric). In this subsection, we derive a number of inequalities bounding $\delta_{\rm ss}$ in terms of the largest resistance and the Kemeny constant. We also discuss how we can bound $\delta_{\rm ss}^{\rm uni}$ in terms of $\delta_{\rm ss}$. All the inequalities derived within this subsection are new.

By putting Theorem \ref{mainthm} together with Eq. (\ref{hsum}), we obtain
\begin{small} \begin{align*} \delta_{\rm ss}(P, {\rm diag}(\sigma_1^2, \ldots, \sigma_n^2)) & =  \sum_{i=1}^n \sum_{j=1}^n \sigma_i^2 \pi_i^2 \pi_j H_{P^2}(i \rightarrow j) \\ 
& =  \left( \max_{i=1, \ldots, n} \sigma_i^2 \pi_i  \right) \left( \max_{i,j} R_{P^2}(i \leftrightarrow j) \right).
\end{align*}   \end{small}

In other words, $\delta_{\rm ss}$ may be quickly bounded in terms of the largest variance, stationary distribution, and resistance. We can also obtain a  lower bound in terms 
of the smallest versions of similar quantities. Indeed: 

\begin{small}
\begin{align*} \delta_{\rm ss}(P, {\rm diag}(\sigma_1^2, \ldots, \sigma_n^2)) & =  \sum_{i=1}^n \sum_{j=1}^n \sigma_i^2 \pi_i^2 \pi_j H_{P^2}(i \rightarrow j) \\ 
& \geq  \left( \min_{i=1, \ldots, n} \sigma_i^2 \pi_i  \right)  \sum_{i=1}^n \sum_{j=1}^n \pi_i \pi_j H_{P^2}(j \rightarrow i) \\
& =  \left( \min_{i=1, \ldots, n} \sigma_i^2 \pi_i  \right)  K(P^2).
\end{align*} 
\end{small}  These inequalities can be used to obtain quick bounds on $\delta_{\rm ss}$ when 
either the resistance of the Kemeny constant are known.

\bigskip

\noindent {\bf Bounding $\delta_{\rm ss}^{\rm uni}$.} The problem of giving a combinatorial characterization of $\delta_{\rm ss}^{\rm uni}(P, \Sigma_w)$ for reversible $P$ is open, to the best of our knowledge. Here we provide
combinatorial lower and upper bounds on $\delta_{\rm ss}^{\rm uni}$ which are  tighter than the best previously known bounds.

Indeed, observe that \[ \delta_{\rm ss}(t) = \sum_{i=1}^n \pi_i E[e_i^2(t)] = \frac{1}{n} \sum_{i=1}^n n\pi_i E[e_i^2(t)], \] so that 
\[ n \pi_{\rm min} \delta_{\rm ss}^{\rm uni}(t) \leq  \delta_{\rm ss}(t) \leq n \pi_{\rm max} \delta_{\rm ss}^{\rm uni}(t),\]  which implies
\[ \frac{\delta_{\rm ss}}{n \pi_{\rm max}} \leq \delta_{\rm ss}^{\rm uni} \leq \frac{\delta_{\rm ss}}{n \pi_{\rm min}} \] 
Thus as a consequence Eq. (\ref{uceq}),  we have \begin{small}
 \begin{align*} \frac{1}{n \pi_{\rm max}}  \sum_{i=1}^n \sum_{j=1}^n \sigma_i^2 \pi_i^2 \pi_j H_{P^2}(j \rightarrow i) & \leq \delta_{\rm ss}^{\rm uni}(P, {\rm diag}(\sigma_1^2, \ldots, \sigma_n^2) ) \end{align*} and 
\begin{align*}  \frac{1}{n \pi_{\rm min}} \sum_{i=1}^n \sum_{j=1}^n \sigma_i^2 \pi_i^2 \pi_j H_{P^2}(j \rightarrow i) & \geq \delta_{\rm ss}^{\rm uni}(P, {\rm diag}(\sigma_1^2, \ldots, \sigma_n^2) ) . \end{align*} \end{small} This pair of bounds may be viewed as an improvement on the results of \cite{zmp}. That paper provided upper and lower bounds on $\delta_{\rm ss}^{\rm uni}$ in terms of the stationary distribution and the electrical resistance; the ratio of the upper and lower bounds given was $(\pi_{\rm max}/\pi_{\rm min})^4$. By contrast, the ratio of the upper and lower bounds in the two equations above is $\pi_{\rm max}/\pi_{\rm min}$.

%

\section{Examples\label{examples}} 

The goal of this section is to demonstrate that ``back of the envelope'' calculations based on Theorem \ref{mainthm} can often be used to give order-optimal estimates of $\delta_{\rm ss}$. Indeed, we will obtain estimates of how $\delta_{\rm ss}$ scales with the number of nodes on many common graphs. The interested reader may skip ahead to the table at the end of this section.

 We begin by describing a natural way in which a stochastic matrix  can be chosen from a graph. Given an {\em undirected} connected graph $G=(\{1, \ldots, n\},E)$ without self-loops, let $d(i)$ denote the degree of node $i$, and let us define  
\begin{equation} \label{pdef} \widetilde{P}_{ij}  = \begin{cases} 1/d(i) & (i,j) \in E, \\ 0  & \mbox{ else. } \end{cases} 
\end{equation} Clearly, $\widetilde{P}$ is a stochastic matrix. However, if the graph $G$ is bipartite the quantity $\delta_{\rm ss}(\widetilde{P}, {\rm diag}(\sigma_1^2, \ldots, \sigma_n^2))$ will be infinite if at least one of $\sigma_i^2$ is strictly positive\footnote{We relegate the justification of this assertion to a footnote. Indeed, suppose that the graph $G$ is bipartite and let $V_1 \cup V_2 = \{1, \ldots, n\}$ be a bipartition. Then the vector $v$ defined as $v_i = d(i), i \in V_1$ and $v_i = -d(i), i \in V_2$ is a left-eigenvector of $P$ with eigenvalue $-1$. Observe that $v^T \1 = 0$ since both $\sum_{i \in V_1} d(i)$ and $\sum_{i \in V_2} d(i)$ count the number of edges going between $V_1$ and $V_2$. Thus 
\[ v^T e(t+1) = v^T x(t+1) = - v^T x(t) + v^T w(t) = - v^T e(t) + v^T w(t). \] Letting $y(t) = (-1)^t v^T e(t)$ this becomes $$y(t+1) = y(t) + (-1)^{t+1} v^T w(t).$$ Since $x(0)=0$ we have $E[v^T e(t)]=0$ and $E[y(t)]=0$. Thus as long as at least one $\sigma_i^2$ is strictly positive, we have that ${\rm Var}(y(t)) \rightarrow +\infty$ and consequently ${\rm Var} (v^T e(t)) \rightarrow +\infty$. Since all $\pi_i$ are strictly positive due to the connectivity of $G$, it is not too hard to see that this implies that $\delta_{\rm ss}$ is infinite.}. An easy fix for this is to consider instead  \begin{equation} P = \frac{1}{2} I + \frac{1}{2} \widetilde{P}. \label{pdef2} \end{equation}  Intuitively, each agent will place half of its weight on itself and distribute half uniformly among neighboring agents. 
It is tautological that if $G$ is connected then $P$ is irreducible. Finally, observe that $P$ constructed this way is  always reversible.

After attending to some preliminary remarks in the next subsection, we proceed to give order-optimal estimates of the quantity $\delta_{\rm ss}(P, {\rm diag}(\sigma_1^2, \ldots, \sigma_n^2))$ for a number of matrices $P$ constructed from graphs in this way.

\medskip

\noindent {\bf Preliminary observations.} 

\medskip

\begin{itemize} 
\item We  note that it is quite easy to compute the stationary distribution of a matrix defined from an undirected graph according to Eq. (\ref{pdef}, \ref{pdef2}). Indeed, letting $m$ be the number of edges in the graph $G$ which are not self loops, it is easy to verify that $\pi_i = d(i)/(2m)$. Naturally, this is also the stationary distribution of $P^2$ and $\widetilde{P}$.

\medskip

\item We remind the reader that for two functions $f, g: X \rightarrow \R$, the notation $f(x) = \Theta( g(x) )$ means that there exist positive  numbers $c,C$ such that $c g(x) \leq f(x) \leq C g(x).$  We will sometimes write this as 
$f(x) \simeq g(x).$

\medskip

\item Observe that on conntected graphs where the total number of edges is linear in $n$, we have that $\pi_i \simeq 1/n$ for all $i$ and consequently $\delta_{\rm ss}^{\rm uni}  \simeq \delta_{\rm ss}$.

\medskip

\item Let us adopt the notation ${\cal H}_M$ for the largest hitting time in the chain which moves according to the stochastic matrix $M$, i.e., ${\cal H}_{M} = \max_{i,j} H_M(i \rightarrow j)$. Then we have the following lemma. 

\begin{lemma} If $M$ is diagonally dominant, then
\[  {\cal H}_{M^2} = O \left(   {\cal H}_M \right). \]  \label{sqrlemma}
\end{lemma}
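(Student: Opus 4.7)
The plan is to relate the $M^2$-walk to the $M$-walk via the natural coupling $Y_k := X_{2k}$, where $(X_t)_{t\ge 0}$ is an $M$-chain started at $i$. Under this coupling $H_{M^2}(i\to j)=\tfrac12 E_i[T]$, where $T$ is the first \emph{even} time $t\ge 0$ at which $X_t=j$. So the lemma reduces to bounding $E_i[T]$ by $O(\mathcal{H}_M)$.

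First I would decompose on the parity of the ordinary $M$-hitting time $\tau:=\min\{t:X_t=j\}$, which satisfies $E_i[\tau]\le \mathcal{H}_M$. On the event $\{\tau \text{ even}\}$ we have $T=\tau$ and nothing more needs to be said. On $\{\tau \text{ odd}\}$ the walk sits at $j$ at an odd time, and here I would invoke the diagonal dominance hypothesis, which for a stochastic matrix I read as $M_{jj}\ge 1/2$. By the strong Markov property at $\tau$, with probability at least $1/2$ we have $X_{\tau+1}=j$ and $T=\tau+1$; otherwise the walk lands at some $k\ne j$ at the now-even time $\tau+1$, from which the expected residual time to see $j$ at an even time is, by definition, at most $2\mathcal{H}_{M^2}$.

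Assembling these two cases I expect to obtain
\[
E_i[T] \;\le\; H_M(i\to j) \;+\; 1 \;+\; (1-M_{jj})\cdot 2\mathcal{H}_{M^2}
\;\le\; \mathcal{H}_M \;+\; 1 \;+\; \mathcal{H}_{M^2}.
\]
Halving and taking the supremum over $i\ne j$ yields the self-bounding inequality
\(
\mathcal{H}_{M^2}\;\le\;\tfrac12\mathcal{H}_M+\tfrac12+\tfrac12\mathcal{H}_{M^2},
\)
which rearranges to $\mathcal{H}_{M^2}\le \mathcal{H}_M+1=O(\mathcal{H}_M)$, as claimed.

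The one delicate step is the appearance of $\mathcal{H}_{M^2}$ on both sides of the bound, which at first glance looks circular; diagonal dominance is exactly what guarantees that the coefficient multiplying $\mathcal{H}_{M^2}$ on the right-hand side is at most $1/2$, so the inequality can be solved. Without this self-loop probability of $\ge 1/2$, parity obstructions of the sort that already make $\delta_{\rm ss}$ infinite on bipartite graphs (cf.\ the footnote following Eq.~(\ref{pdef})) would defeat the argument, so the diagonal-dominance hypothesis is genuinely being used.
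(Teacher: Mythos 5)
Your proof is correct, and it takes a genuinely different route from the paper's. Both arguments rest on the same coupling (the $M^2$-walk is the $M$-walk observed at even times) and both use the self-loop probability $M_{jj}\ge 1/2$ at the target to repair parity, but the quantitative mechanics differ. The paper works with tail probabilities: it shows $P(T_{M^2}(i\to j)\le t)\ge \tfrac12 P(T_M(i\to j)\le 2t)$, plugs in $t=\mathcal{H}_M$, applies Markov's inequality to get a success probability of $1/4$ per block of $\mathcal{H}_M$ steps, and iterates geometrically to conclude $\mathcal{H}_{M^2}\le 4\mathcal{H}_M$. You instead decompose the expectation directly on the parity of the first hitting time and solve a self-bounding inequality, which buys a sharper constant ($\mathcal{H}_{M^2}\le \mathcal{H}_M+1$) and avoids the Markov-inequality/renewal step. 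The one thing you should add is a sentence justifying that $\mathcal{H}_{M^2}<\infty$ \emph{before} rearranging: the inequality $\mathcal{H}_{M^2}\le \tfrac12\mathcal{H}_M+\tfrac12+\tfrac12\mathcal{H}_{M^2}$ is vacuous if both sides are infinite. This is easy here --- since $[M^2]_{ij}\ge M_{ii}M_{ij}\ge \tfrac12 M_{ij}$, irreducibility of $M$ (which is implicit whenever $\mathcal{H}_M<\infty$) passes to $M^2$, so all its hitting times are finite on a finite state space --- but it is exactly the kind of step the paper's geometric-trials argument gets for free, since that argument produces a finite upper bound on $E[T_{M^2}(i\to j)]$ directly rather than by rearrangement.
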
 Although this statement is elementary, we provide a proof for completeness. 
\begin{proof}  Consider any pair of nodes $i,j$ and let $T_M(i \rightarrow j)$ be the first time a random walk starting at $i$ and moving according to $M$ hits $j$, i.e., $T_M(i \rightarrow j)$ is the
random variable whose expectation is $H_M(i \rightarrow j)$.  Then, as a consequence of the diagonal dominance of $M$, we have that for any time $t$,
\begin{equation} \label{squareineq} P(T_{M^2}(i \rightarrow j) \leq  t) \geq \frac{P(T_M(i \rightarrow j) \leq 2t)}{2}. \end{equation} This is true because: \begin{itemize} \item The probability of the event $\{T_{M^2}( i \rightarrow j) \leq t\}$ equals the probability that a random walk starting at $i$ and moving according to $M$ hits $j$ by time $2t$ at an even time step. \item Consider a sample path in the chain moving according to $M$ which starts at $i$ and ends when it hits $j$, which happens by time $2t$. Either (a) this sample path hits $j$ at an even time step (b) this sample path can be extended with a self-loop to further hit $j$ at an even time step by time $2t$ (and the probability of taking that self-loop is at least $1/2$ by diagonal dominance). 
\end{itemize}

We next plug $t = {\cal H}_M$ into Eq. (\ref{squareineq}) to obtain 
\[ P(T_{M^2}(i \rightarrow j) \leq {\cal H}_M) \geq \frac{P(T_M(i \rightarrow j) \leq 2 {\cal H}_M)}{2} \geq \frac{1}{4}, \] where the last step used Markov's inequality. Since this did not depend on the starting point $i$, we can iterate this argument to obtain that $E[T_{M^2}(i \rightarrow j)] \leq 4 {\cal H}_M$, which is what we needed to show. 
\end{proof} 

\medskip

\item As a consequence of the last bullet as well as the fact that $H_{P}(i \rightarrow j) = 2 H_{\widetilde{P}}(i \rightarrow j)$ is that ${\cal H}_{P^2} \simeq {\cal H}_{\widetilde{P}}$. 

\medskip

\item A convenient tool to compute upper bounds on hitting times in $\widetilde{P}$ is their connection to electric resistances. We refer the reader back to Section \ref{sym.1} for the definition of electric resistance $R_M(i \leftrightarrow j)$  and here merely recall the identity 
\begin{equation} \label{hsum} H_M(i \rightarrow j) + H_M(j \rightarrow i) = R_{M} (i \leftrightarrow j)\end{equation} 

For the matrix $\widetilde{P}$ defined in Eq. (\ref{pdef2}) we have that for every pair of neighbors $x,y$, $$q_P(x,y) = \frac{1}{d(x)} \frac{d(x)}{2m} = \frac{1}{2m},$$ where recall $m$ is the number of edges in the graph $G$. {\em Consequently, the resistance $R_{\widetilde{P}}(i \leftrightarrow j)$ can be obtained as electrical resistance between $i$ and $j$ in a graph where 
every edge has resistance $2m$.}

\smallskip

%
%

\end{itemize}

With these preliminary remarks in place, we now turn to the problem of computing $\delta_{\rm ss}$ for matrices which come from graphs according to Eq. (\ref{pdef}, \ref{pdef2}).  {\em We will be assuming that  $\Sigma_w = {\rm diag}(\sigma_1^2, \ldots, \sigma_n^2)$ for the remainder of this section (and in places we will even consider the case when all $\sigma_i^2$ are equal to the same $\sigma^2$)}.  As we will see next, we can use Theorem \ref{mainthm} as well as the above preliminary observations to estimate $\delta_{\rm ss}$ to within a constant multiplicative factor for a number of common graphs. 

\medskip

\noindent {\bf The complete graph.} By symmetry $\pi_i = 1/n$ for all nodes. Moreover,  for every pair $i,j$ such that $i \neq j$, $H_{P^2}(j \rightarrow i)  \simeq n.$ Thus by Eq. (\ref{uceq}),
\[ \delta_{\rm ss} = \sum_{i=1}^n \sigma_i^2 \frac{1}{n^2} \sum_{j \neq i}  \frac{1}{n} \Theta(n) \simeq  \frac{\sum_{i=1}^n \sigma_i^2}{n}. \] This fact can also be obtained by an easy calculation directly from the definition of $\delta_{\rm ss}$. 

\medskip

\noindent {\bf The circle graph.} Once again, by symmetry we have that $\pi_i = 1/n$ for all nodes. An additional consequence of symmetry is that $H_{\widetilde{P}}(j \rightarrow i) = H_{\widetilde{P}}(i \rightarrow j)$, and so by Eq. (\ref{hsum}) both of these quantities equal half of the resistance between nodes $i$ and $j$. That resistance can be computed by taking
two parallel paths, one with length $|j-i|$ and the other with length $n-|j-i|$; each edge of the path has resistance $O(n)$.  In the worst case, the resitance is quadratic, meaning that we can bound ${\cal H}_{P^2} = O(n^2)$. Thus by Eq. (\ref{uceq}),
\begin{small}
\begin{eqnarray*} \delta_{\rm ss} &  =  & \sum_{i=1}^n \sigma_i^2 \frac{1}{n^2} \sum_{j \neq i} \frac{1}{n} O(n^2) \\ 
&  = & O \left( \sum_{i=1}^n \sigma_i^2 \right). 
\end{eqnarray*}
\end{small}

\medskip

\noindent {\bf The line graph.} On the line graph, we have that the corner nodes have stationary distributions which are $\pi_i \simeq 1/n$. By a standard ``gambler's ruin'' type argument, we have that ${\cal H}_{\widetilde{P}} = O(n^2)$. Thus the calculation is the same as for the ring graph, i.e.,
\begin{eqnarray*} \delta_{\rm ss} &  = & O \left(  \sum_{i=1}^n \sigma_i^2 \right)
\end{eqnarray*} We remark that $\delta_{\rm ss}^{\rm uni}$ has the same scaling, as a consequence of the fact that $\pi_i \simeq 1/n$ for all $i$.

\medskip

\noindent {\bf The star graph.} Let us adopt the convention that node $1$ is the center of the star and nodes $2, \ldots, n$ are the leafs.
We then have that $\pi_1 \simeq 1$ and $\pi_i \simeq 1/n$ for $i=2, \ldots, n$. Furthermore, $H_{P^2}(i \rightarrow 1) \simeq 1$ for $i=2,\ldots, n$ while $H_{P^2}(1 \rightarrow i) \simeq n$ and $H_{P^2}(j \rightarrow i) \simeq n$ for all $i,j$ with $i>1,j>1, i \neq j$. 
Consequently, 
\begin{eqnarray*} \delta_{\rm ss}  &  \simeq & \sigma_1^2  \sum_{j \neq i} \frac{1}{n} 1 + \sum_{i=2}^n \sigma_i^2  \frac{1}{n^2}  \left(1 \cdot n + \sum_{k= 2, \ldots, n, ~~ k \neq i} \frac{1}{n} n \right)   \\
& \simeq & \sigma_1^2 + \frac{ \sigma_2^2 + \cdots + \sigma_n^2}{n}.  
\end{eqnarray*}

 As might be expected, noise at the center vertex contributes an order-of-magnitude more to $\delta_{\rm ss}$  than noise at a leaf vertex with the same variance. We also remark that $\delta_{\rm ss}^{\rm uni}$ is upper by the above scaling since the total number of edges is linear. 

\medskip

\noindent {\bf The two-star graph.} Consider two stars  joined by a link connecting their centers. It is not hard to see that all hitting times in $P^2$ are $\Theta(n)$, with the exception of hitting times from a leaf to its own center, which are $\Theta(1)$ as before. Adopting the conventions of having node $1$ and node $n$ denote the two centers, we have that 
\[ \pi_1 = \pi_n \simeq 1, ~~ \pi_k \simeq \frac{1}{n}, \mbox{ for all } k \neq 1,n.\] 

Thus  \begin{eqnarray*} \delta_{\rm ss} & \simeq & (\sigma_1^2  + \sigma_n^2) ( 1 \cdot n + n \frac{1}{n} \cdot 1 + n \frac{1}{n} n ) \\ && + \sum_{i=2}^{n-1} \sigma_i^2 \frac{1}{n^2}  \left(  2 \cdot 1 \cdot n + \sum_{j \neq i} n \frac{1}{n} \right)   \\ 
& \simeq & n (\sigma_1^2 +  \sigma_n^2 ) + \frac{\sigma_2^2 + \cdots + \sigma_{n-1}^2}{n}.  
\end{eqnarray*} 

It is interesting to compare our results for  the star graph with our results for the two-star graph. While on the star graph, noise at the center vertex contributes $\Theta( n )$ times more to the limiting disagreement than noise at a leaf vertex, on the two-star graph the corresponding factor is $ \Theta \left( n^2 \right)$.
Furthermore, if all $\sigma_i^2$ are positive and bounded away from zero independently of $n$, the disagreement on the two-star graph is $\Theta(n)$ while disagreement on the star graph is $\Theta(1)$. One implication of these comparisons is that the diameter of the graph (which is constant for both the star and the two-star graph) does not determine the order of magnitude of $\delta_{\rm ss}$. 

Finally, we also remark that $\delta_{\rm ss}^{\rm uni}$ is upper by the above scaling since the total number of edges is linear.

\medskip

\noindent {\bf The starry line graph.} We now describe a graph on which $\delta_{\rm ss}$ scales quadratically in the number of nodes $n$ when $\sigma_i^2 = \sigma^2$ for all $i$ -- an order of 
magnitude worse than on all the examples we have considered hitherto. We have not seen this graph described in the literature and we call it the 
{\em starry line graph}.

The construction of the graph is simple. We take a line graph on $n/3$ nodes and two star graphs on $n/3$ nodes (let us assume $n$ is divisible by $3$). We join these three graphs together as follows: we
put an edge between the center of the first star and the left-most vertex of the line and put an edge between the center of the second star and the
right-most vertex of the line. 

We first argue that $\delta_{\rm ss}$ scales at least quadratically on this graph. Indeed, let node $1$ be the center of the first star and let node $n$ be the center of the second star. Considering resistances and using Eq. (\ref{hsum}), we immediately see that   $H_{\widetilde{P}}(1 \rightarrow n)+ H_{\widetilde{P}}(n \rightarrow 1) = \Theta(n^2)$. By symmetry, this implies that both $H_{\widetilde{P}}(1 \rightarrow n)$ and $H_{\widetilde{P}}(n \rightarrow 1)  $ are $\Omega(n^2)$. Since $H_{M^2}(i \rightarrow j) = \Omega(H_M(i \rightarrow j))$, this implies that both $H_{P^2}(1 \rightarrow n), H_{P^2}(n \rightarrow 1)$ are also $\Omega(n^2)$. Since the stationary distribution at both nodes $1$ and $n$ is lower bounded independently of $n$, 
we immediately obtain that $\delta_{\rm ss} \geq \sigma^2 \pi_1^2 \pi_n H_{P^2}(n \rightarrow 1) = \sigma^2 \Omega(n^2)$. 

To get that $\delta_{\rm ss} \simeq n^2$, we argue that the contributions from all other pairs of nodes $i,j$ in Eq. (\ref{samevar}) is not more than $\sigma^2 O(n^2)$. We will use the bound $H_{P^2}(j \rightarrow i) = O(n^2)$ for all $i,j$, which follows from ${\cal H}_{\widetilde{P}}= O(n^2)$ from resistance analysis. Indeed, if neither of $i,j$ is $1$ or $n$, the assertion we need follows since there are $O(n^2)$ such pairs, all with  $\pi_i^2 \pi_j = O(1/n^3)$, so their contribution is $O(n^2 n^3 (1/n^3)) = O(n^2)$. For pairs $i,j$ when one of $i,j$ is $1$ or $n$, we have that there are $O(n)$ such pairs with $\pi_i^2 \pi_j = O(1/n)$, so their contribution is $O(n (1/n) n^2)=O(n^2)$. This concludes the argument.


\medskip

\noindent {\bf The two-dimensional grid.} Let us assume that $n$ is a perfect square. The two-dimensional grid is the graph with the vertex set $\{ (i,j) ~|~ i = 1, \ldots, \sqrt{n}, j = 1, \ldots, \sqrt{n} \}$, and the edge set which is specified by the rule that  $(i_1, j_1)$ and $(i_2, j_2)$ are connected  if and only if $|i_1 - i_2| + |j_1 - j_2| = 1$.  In other words, each node of the 2D grid is labeled by an integer point in the plane, with edges running left, right, up, and down between neighboring points.

By utilizing the formula $\pi_i = d(i)/m$, we immediately have that $\pi_i \simeq 1/n$ for all nodes. A standard  argument (see Theorem 6.1 of \cite{rescomm}) shows that, with unit resistances on each edge, the largest  resistance in the two-dimensional grid is $O( \log n)$. This means that using Eq. (\ref{hsum}) to bound the commute time (which, recall, involves putting a resistor  of resistance $2m=O(n)$ on every edge) we obtain that,
\[ {\cal H}_{\widetilde{P}} = O ( n \log n ), \] and consequently the same bound holds for ${\cal H}_{P^2}$. This implies that 
\begin{eqnarray*} \delta_{\rm ss} & = & \sum_{i=1}^n \sigma_i^2  (n-1) O \left( \frac{1}{n^3} n \log n \right) \\ 
& = &  \left( \sum_{i=1}^n \sigma_i^2 \right) O \left( \frac{\log n}{n} \right). 
\end{eqnarray*} Finally, note that since the degrees on this graph are all $O(1)$, it follows that $\delta_{\rm ss}$ and $\delta_{\rm ss}^{\rm uni}$ are within a constant factor of each other, and consequently $\delta_{\rm ss}^{\rm uni}$ satisfies the same scaling. 

\medskip

\noindent {\bf The $d$-dimensional grid with $d \geq 3$\label{kgrid}.}  We may define the $d$-dimensional grid analogously by associating the nodes with integer points in $\R^d$ and connecting neighbors. According to Theorem 6.1 of \cite{rescomm}, the largest resistance between any two nodes in a $d$-dimensional grid with unit resistors on edges is $\Theta(1/d)$. This becomes $\Theta(n)$ when we put resistors of resistance $2m=\Theta(nd)$ on each each edge. An implication is that ${\cal H}_{\widetilde{P}} = O(n)$.  Since all degrees are within a factor of $2$ of each other, we also have that $1/(2n) \leq \pi_i \leq 2/n$ for all nodes $i$. Putting this together gives 
\[ \delta_{\rm ss} = O \left( \frac{\sum_{i=1}^n \sigma_i^2}{n} \right). \] 
Finally, for the same reason as on the 2D grid, $\delta_{\rm ss}^{\rm uni}$ satisfies the same scaling. 

\medskip

\noindent {\bf The complete binary tree\label{tree}.} It is shown in Section 11.3.1 of \cite{lpw} that for the complete binary tree on $n$ nodes, ${\cal H}_{\widetilde{P}} = O(n \log n)$. Since all degrees are within a factor of $2$ of each other, we have $\pi_i \simeq 1/n$ for all nodes. We thus immediately have the same estimate as for the 2D grid, namely
\[ \delta_{\rm ss} = \left( \sum_{i=1}^n \sigma_i^2 \right) O \left( \frac{\log n}{n} \right). \]  Again since all degrees are within a factor of $2$ of each other, 
$\delta_{\rm ss}^{\rm uni}$ satisfies the same scaling.

\medskip

\noindent {\bf Regular expander graphs.} We first give (one of the) standard definitions of an expander graph. Given a graph $G=(\{1, \ldots, n\}, E)$ and a subset $V' \subset \{1, \ldots, n\}$ we introduce the notation $N(V')$ to denote the set of neighbors of nodes in $V'$, i.e., $N(V) = \{ j ~|~ (i,j) \in E \mbox{ for some } i \in V' \}$. The graph $G$ is called a 
$\alpha$-expander if for every  $V' \subset \{1, \ldots, n\}$ with $|V'| \leq n/2$ we have
$|N(V') - V'| \geq \alpha |V'|$. 

It is Theorem 5.2 in \cite{rescomm} that a regular connected $\alpha$-expander with degree $d$ has resistance at most $O(1/(\alpha^2 d))$ with unit resistors on edges. As a consequence, all commute times in $\widetilde{P}$ are bounded by $O((1/(\alpha^2d)) \cdot d n) = O(n/\alpha^2)$ so that 
\begin{eqnarray*} \delta_{\rm ss} & = & \sum_{i=1}^n \sigma_i^2 \sum_{j \neq i} \frac{1}{n^3} O \left( \frac{n}{\alpha^2} \right) \\ 
& = & \frac{\sum_{i=1}^n \sigma_i^2}{n} ~O \left( \frac{1}{\alpha^2} \right). 
\end{eqnarray*}

Since the graph is regular, $\delta_{\rm ss} = \delta_{\rm ss}^{\rm uni}$ in this case.

\medskip

\noindent {\bf  Dense Erdos-Renyi random graphs.} We next argue that 
\begin{equation} \label{er} \delta_{\rm ss} = O \left(  \frac{\sum_{i=1}^n \sigma_i^2}{n} \right), \end{equation} on an Erdos-Renyi random graph with high probability\footnote{A statement is said to hold with high probability if the probability that it does not hold approaches zero as $n \rightarrow \infty$.}, subject to assumptions  we will spell out shortly. Note that in order to obtain such a result, wewill use the fact that all stationary distribution entries are $\simeq 1/n$ in magnitude and all hitting times are linear. The latter result is apparently available in the literature in \cite{LoweTorres} only for dense Erdos-Renyi random graphs. 

More formally, we consider an undirected Erdos-Renyi random graph on $n$ nodes, meaning that each edge appears independently with a probability of $p_n$. Under the assumption that $(\log n)^{\Theta(\log \log n)}/(n p_n) \rightarrow 0$ as $n \rightarrow \infty$ (this means that the total number of edges in the random graph has expectation that grows slightly faster than $n$, namely faster than $n (\log n)^{\log \log n}$),  it follows from the results of \cite{LoweTorres} that there exists constants $c,C$ such that with high probability we have that for all $i$, \[ cn \leq  \sum_{j=1}^n \pi_j H_{\widetilde{P}} (j \rightarrow i) \leq C n. \] Thus $K(\widetilde{P}) \simeq n$, and therefore $K(P) \simeq n$. Since diagonal dominance of $P$ implies its eigenvalues are nonnegative via Gershgorin circles, we have that $K(P^2) \leq K(P)$ by Eq. (\ref{keig}), and we finally obtain that $K(P^2) = O(n)$ with high probability.  Finally, since $\pi_i = d(i)/2m$ it is quite easy to see that all $\pi$ are on the order of $1/n$ with high probability; formally, we refer the reader to Lemma 3.2 of of \cite{LoweTorres}. We thus have
\[ \delta_{\rm ss} = \sum_{i=1}^n \sigma_i^2 O \left( \frac{1}{n^2} \right)  O(n) = O \left( \frac{\sum_{i=1}^n \sigma_i^2}{n} \right). \]  Finally, $\delta_{\rm ss}^{\rm uni}$ follows the same scaling under these assumptions since with high probability all $\pi_i$ are on the order of $1/n$. 

%
%

\medskip

\noindent {\bf Regular dense graphs.} Let $G$ be a regular graph with degree $d \geq \lfloor n/2 \rfloor$. Then it is Theorem 3.3 in \cite{rescomm} that the largest resistance in such a graph graph with unit resistances on the edges is $O(1/n)$. It we put a resistor of size $2m = O(nd)$ on each edge, the largest resistance becomes $O(d)$. We thus have
\begin{eqnarray*} \delta_{\rm ss} & = &  \sum_{i=1}^n \sigma_i^2 \sum_{j=1}^n \frac{1}{n^3} O(d)   =   O \left( \frac{ \sum_{i=1}^n \sigma_i^2}{n} \right)
\end{eqnarray*}  Once again, because on a regular graph $\delta_{\rm ss} = \delta_{\rm ss}^{\rm uni}$, we have that the same asymptotic holds for $\delta_{\rm ss}^{\rm uni}$.

\smallskip

\noindent {\bf Regular graphs.} We now argue that on any regular graph, $\delta_{\rm ss} =  O \left( \sigma_1^2 + \cdots + \sigma_n^2 \right)$. In particular, this implies that the ring graph 
achieves the worst possible scaling for any  regular graph. At first glance, this might not sound surprising since the ring graph is the sparsest connected regular graph; however, looking at the table at the end of this subsection, we see that there is no clear connection between $\delta_{\rm ss}$ and sparsity. 

This fact is an immediate consequence of the main result of \cite{CFS}, which implies that in a regular graph  ${\cal H}_{\widetilde{P}} = O(n^2)$. Since $\pi_i = 1/n$ for all $i$ due to regularity, we have that 
\[ \delta_{\rm ss} = \sum_{i=1}^n \sigma_i^2 \sum_{j=1}^n \frac{1}{n^3} O(n^2) = O \left( \sum_{i=1}^n \sigma_i^2 \right). \] Moreover, on a regular graph we have that $\delta_{\rm ss} = \delta_{\rm ss}^{\rm uni}$, so that  $\delta_{\rm ss}^{\rm uni}$ satisfies the same upper bound.

\medskip

\noindent {\bf Summary.} We provide a table to summarize all the bounds  for $\delta_{\rm ss}$ on concrete graphs  obtained in the preceeding subsections. 

\begin{center}
\begin{tabular}{|l|c|} 
\hline Graph & $\delta_{\rm ss}$ \\ 
\hline Complete & $\simeq (\sum_{i=1}^n \sigma_i^2 )/n$ \\[1ex]  
\hline Line & $O \left(  \sum_{i=1}^n \sigma_i^2 \right)$ \\[1ex] 
\hline Ring & $O \left( \sum_{i=1}^n \sigma_i^2 \right)$  \\[1ex]   
\hline Star &  $\simeq \sigma_1^2 + (1/n) \sum_{i=2}^n \sigma_i^2 $ \\[1ex]   
\hline Two-star &  $\simeq n (\sigma_1^2 + \sigma_n^2 )+ (1/n) \sum_{i=2}^{n-1} \sigma_i^2 $ \\[1ex]  
\hline Starry line graph & $\simeq \sigma^2 n^2$ when $\sigma_i^2=\sigma$. \\[1ex] 
\hline 2D grid & $(\sum_{i=1}^n \sigma_i^2 ) O ( (\log n)/n)$ \\[1ex]  
\hline kD grid with $k \geq 3$ & $ O (\sum_{i=1}^n \sigma_i^2) /n $ \\[1ex]  
\hline \specialcell{Complete binary\\ tree} & $(\sum_{i=1}^n \sigma_i^2 ) O ( (\log n)/n)$ \\[1ex]  
\hline \specialcell{Regular $\alpha$-expander \\ graphs} & $O(1/\alpha^2) \cdot (\sum_{i=1}^n \sigma_i^2 )/n$ \\[1ex] 
\hline \specialcell{Dense Erdos-Renyi \\ random graphs} & $O (\sum_{i=1}^n \sigma_i^2 )/n$ \\[1ex]  
\hline \specialcell{Regular dense \\ graphs} & $O (\sum_{i=1}^n \sigma_i^2 )/n$ \\[1ex]  
\hline Regular  graphs & $O (\sum_{i=1}^n \sigma_i^2 )$ \\[1ex]  
\hline 
\end{tabular} 
\end{center}

\section{Formation control from noisy relative position measurements\label{symmetric}}

In this section we consider the problem of formation control from noisy relative position measurements, i.e., when each node can measure the (noisy) position of neighboring nodes relative
to itself. We will show that, using Theorem \ref{mainthm}, we can characterize the long-term performance of a class of natural protocols in this settings in terms of the Kemeny constant of an underlying graph.

We begin with a formal statement of the problem.  Our exposition here closely parallels our earlier works \cite{othesis, olinear}. We consider $n$ nodes which start at arbitrary positions $\p_i(0) \in \R^d$. As in the previous sections, there is a graph $(V,E)$, and now the goal of the nodes is to move into a formation which is characterized by certain desired differences along the edges of this graph.

Formally we associate with each edge $(i,j) \in E$ a vector ${\bf r}_{ij} \in \R^d$ known to both nodes $i$ and $j$. A collection of points $\p_1, \ldots, \p_n$ in $\R^d$ are said to be ``in formation'' if 
for all $(i,j) \in E$ we have that $\p_j - \p_i = {\bf r}_{ij}$. In the current section (i.e., in Section \ref{symmetric}), we will be assuming that $G$ is a directed graph with the ``bidirectionality'' property that $(i,j) \in E$ implies $(j,i) \in E$; we will do this so that we may refer to $(i,j)$ and $(j,i)$ as distinct edges of the graph. Naturally, we will also assume $G$ is strongly connected. 

Note  that, given the vectors ${\bf r}_{ij}$, there may not exist a collection of points in formation; that is, some collections of vectors $\{{\bf r}_{ij}, (i,j) \in E\}$ may be thought of as  ``inconsistent.'' For example, unless ${\bf r}_{ij} = -{\bf r}_{ji}$  for all $(i,j) \in E$ the collection $\{{\bf r}_{ij}, (i,j) \in E\}$ will clearly be inconsistent. Moreover, since the property of being in formation is defined through {differences} of position, any translate of a collection of points in formation is itself in formation.  

\begin{figure*} \begin{center}

\begin{tabular}{cc} 
\begin{tikzpicture}[->, thick]
\SetVertexNormal[LineColor=black]
\SetVertexMath

\node (A1) at (0,0) [circle, draw] {$1$};
\node (A2) at (2,2) [circle, draw] {$2$};
\node (A3) at (0,4) [circle, draw] {$3$};
\node (A4) at (-2,2) [circle, draw] {$4$};

\draw (A1) -- (A2) node [midway, fill=white] {${\bf r}_{12} = [1,1]$};
\draw (A2) -- (A1) node [midway, fill=white] {${\bf r}_{12} = [1,1]$};

\draw (A2) -- (A3) node [midway, fill=white] {${\bf r}_{23} = [-1,1]$};
\draw (A3) -- (A2) node [midway, fill=white] {${\bf r}_{23} = [-1,1]$};

\draw (A3) -- (A4) node [midway, fill=white] {${\bf r}_{34} = [-1,-1]$};
\draw (A4) -- (A3) node [midway, fill=white] {${\bf r}_{34} = [-1,-1]$};

\draw (A1) -- (A4) node [midway, fill=white] {${\bf r}_{14} = [1,-1]$};
\draw (A4) -- (A1) node [midway, fill=white] {${\bf r}_{14} = [1,-1]$};

\end{tikzpicture} &~~~~~~~~~~~~~~~~~~ \includegraphics[scale=0.37]{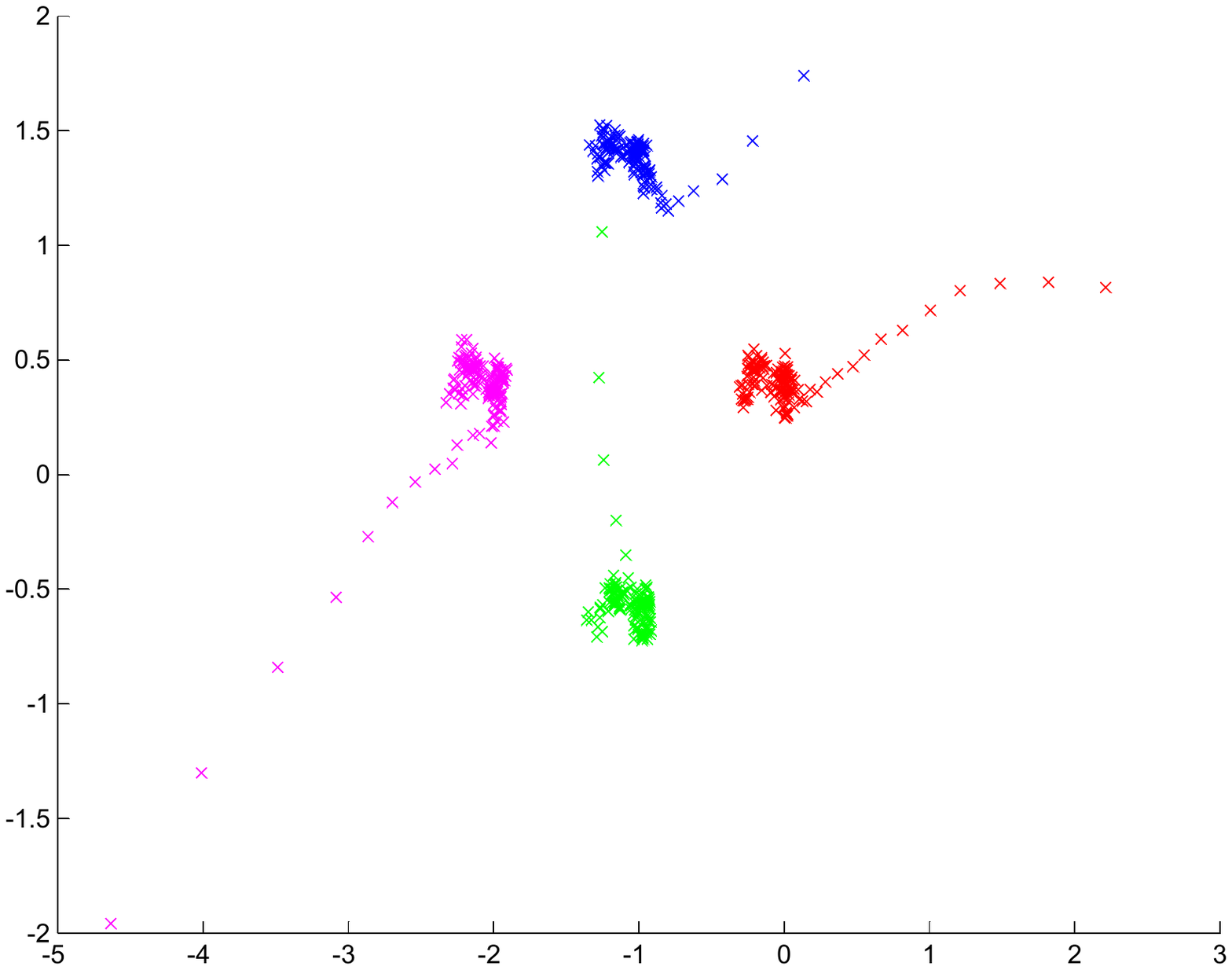} 
\end{tabular}

\caption{The offsets shown on the left side of the figure define a ``ring formation'' with $4$ nodes. On the right, we show the result of simulating Eq. (\ref{formnoise}) on this graph with all the weights $f_{ij}$ equal to $1/9$ starting from random positions. We see that the nodes begin by moving close to the formation and spend the remainder of the time doing essentially a random walk in a neighborhood of the formation. }
\label{ex1}
\end{center} \end{figure*} 
We thus consider the following problem: a collection of nodes would like to repeatedly update their positions so that $\p_1(t), \ldots, \p_n(t)$ approaches some collection of points  in  formation.  We  assume that  node $i$ knows  $\p_j(t)-\p_i(t)$ for all of its neighbors $j$ at every time step $t$ and furthermore we assume a ``first-order'' model in which each node can update its positions from step to step. The protocols we derive for this problem will not assume the presence of a centralized coordinate system common to all the nodes.

A considerable literature has emerged in the past decade spanning many variants of the formation control problem. We make no attempt to survey the vast number of papers that have been published on the topic and refer the interested reader to the surveys \cite{ren, ren2, ofm07}. We stress that the problem setup we have just described is only one possible way to approach the formation control problem; a popular and complementary approach is to consider formations defined by distances $||\p_j - \p_i||_2$ rather than the relative positions $\p_j - \p_i$ (see e.g., \cite{rig1, rig2, rig3, krick}). In terms of the existing literature, our problem setup here is closest to some of the models considered in \cite{krick, ren2, ofm07, fm04, othesis}.


%
 A natural idea is for the nodes to do gradient descent on the potential function $\sum_{(i,j) \in E} || \p_j - \p_i - {\bf r}_{ij}||_2^2$.  This leads to the update rule  \begin{small} \begin{eqnarray} \p_i(t+1) &  = & \p_i(t) + \sum_{j \in N(i)} f_{ij} (\p_j(t) - \p_i(t) - {\bf r}_{ij}),\, \label{formup}
\end{eqnarray} \end{small} where $\{f_{ij}\}$ are positive numbers that, for technical reasons, need to satisfy the step-size condition $\sum_{j \in N(i)} f_{ij} < 1$ for all $i$.  

Note that this update may be implemented in a completely decentralized way as long as node $i$ knows the differences ${\bf p}_j(t) - {\bf p}_i(t)$ and the desired relative positions ${\bf r}_{ij}$. Indeed, the above update allows node $i$ to translate  knowledge of the differences ${\bf p}_j(t) - {\bf p}_i(t)$, which can be measured directly, into knowledge of the difference ${\bf p}_i(t+1) - {\bf p}_i(t)$, which in turn be used to update the current position. In other words, this update may be executed without node $i$ ever knowing what the actual position ${\bf p}_i(t)$ is. 

 It is easy to see that if there exists at least one collection of points in formation, then this control law works in the sense that all $\p_i(t)$ converge and $\p_j(t) - \p_i(t) \rightarrow {\bf r}_{ij}$ for all $(i,j) \in E$ (considerably stronger statements were proved in \cite{fm04, ren}).  For completeness, let us sketch the proof of this simple claim now. If $\overline{\p}_1(t), \ldots, \overline{\p}_n(t)$ is any collection of points in formation, then defining 
\[ {\bf u}_i(t) := \p_i(t) - \overline{\p}_i(t), \] we  have that ${\bf u}_i(t)$ follow the update
\begin{equation} \label{u-up} {\bf u}_i(t+1) = {\bf u}_i(t) + \sum_{j \in N(i)} f_{ij} ( {\bf u}_j(t) - {\bf u}_i(t)). \end{equation} Let $P^{\rm form}$ be the unique stochastic matrix which satisfies $P_{ij}^{\rm form} = f_{ij}$ and let ${\bf u}^j(k)$ be the vector which stacks up the $j$'th entries of the vectors ${\bf u}_1(t), \ldots, {\bf u}_n(k)$. We thus have 
\[ {\bf u}^{j}(k+1) = P^{\rm form} {\bf u}^j(k), ~~~ \mbox{ for all } j = 1, \ldots, d, \] and it is now immediate that all ${\bf u}_i(t)$ approach the same vector. This implies that all $\p_i(t)$ approach positions in formation.

\bigskip

We now turn to the case where the formation control update of Eq. (\ref{formup}) is executed with noise; as we will see, under appropriate assumptions the performance of the (noisy) formation control protocol can be written as the $\delta_{\rm ss}$ of a certain matrix.  Specifically, we will consider the update \begin{footnotesize}
\begin{eqnarray} \label{formnoise} \p_i(t+1) &  = & \p_i(t) + \sum_{j \in N(i)} f_{ij} ( \p_j(t) - \p_i(t) - {\bf r}_{ij} ) + {\bf n}_{i}(t)
\end{eqnarray} \end{footnotesize} The random vector ${\bf n}_i(t)$ can arise if each node executes the motion that updates its position $\p_i(t)$ imprecisely. Although our methods are capable of handling quite general assumptions on the noise vectors ${\bf n}_i(t)$, for simplicity {\em let us assume that  $E[{\bf n}_i(t)] = 0$, $E[ {\bf n}_i(t) {\bf n}_i(t)^T ] = \lambda_i^2 I $ for all $i,t$, and that ${\bf n}_i(t_1)$ and ${\bf n}_j(t_2)$ are independent whenever $t_1 \neq t_2$ or $i \neq j$. }

Of course, once noise is added convergence to a multiple of the formation will not be possible; rather, we will be measuring performance by looking at the asymptotic distance to the closest collection of points in formation. For an illustration, we refer the reader to Figure \ref{ex1} which shows a single run of Eq. (\ref{formnoise}) with four nodes.  As can be read off from the figure, the nodes will move ``towards the formation'' when they are far away from it, but when they are close the noise terms ${\bf n}_i(t)$ effectively preclude the nodes from moving closer and the nodes end up performing random motions in a neighborhood of the formation.

We next formally define the way we will measure the performance of the formation control protocol. Let $\widehat{\p}_1(t), \ldots, \widehat{\p}_n(t)$ be a collection of points in formation whose centroid is the same as the centroid of $\p_1(t), \ldots, \p_n(t)$, i.e., \[ \frac{1}{n} \sum_{i=1}^n  \p_i(t) = \frac{1}{n} \sum_{i=1}^n \widehat{\bf p}_i(t). \] It is easy to see that, as long as there exists a single collection of points in formation, such $\widehat{\bf p}_1(t), \ldots, \widehat{\p}_n(t)$ always exist, and in fact
 $\widehat{\p}_1(t), \ldots, \widehat{\p}_n(t)$ is  closest collection of points in formation to ${\p}_1(t), \ldots, {\p}_n(t)$. Therefore, we will measure the performance of the formation control scheme via the quantity  
\[ {\rm Form}(G, \{f_{ij}\}) := \lim \sup_{t \rightarrow \infty} \frac{1}{n} \sum_{i=1}^n E \left[ ||\p_i(t) - \widehat{\bf p}_i(t)||^2 \right]. \]

In general, obtaining a combinatorial expression for ${\rm Form}(G, \{f_{ij}\})$ is an open problem.  The next proposition describes a solution once again under the additional condition that the weights $\{f_{ij}\}$ are symmetric, i.e., $f_{ij}=f_{ji}$. 

\begin{proposition}[Performance of Formation Control with Symmetric Weights as Steady-State Disagreement] \label{preprop} Let $Q$ be the matrix defined by $Q_{ij} = \lambda_i^2 + \lambda_j^2$. If 
\begin{itemize} \item  There exists at least one collection of points in formation. \item  The underlying graph $G=(V,E)$ is bidirectional and connected. \item The numbers $\{f_{ij}, (i,j) \in E\}$ are positive and  satisfy $\sum_{j \in N(i)} f_{ij} < 1$ for all $i$ and $f_{ij}=f_{ji}$ for all $(i,j) \in E$. \end{itemize} then \begin{small}

\begin{align*} {\rm Form}(G, \{f_{ij}\}) & = d \cdot \delta_{\rm ss} \left(P^{\rm form}, \frac{1}{n} \left( n {\rm Diag}(\lambda_1^2, \ldots, \lambda_n^2) - Q \right. \right. \\ & \left. \left. ~~~~~~~~~~~~~~~~~~~~~~~ + \left( \frac{\sum_{l=1}^n \lambda_l^2}{n} \right) \1 \1^T \right) \right). \end{align*} \end{small}
\label{formprop} 
\end{proposition}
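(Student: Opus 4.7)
The plan is to reduce the noisy formation-control dynamics to a noisy consensus iteration by a change of variables, and then invoke Theorem~\ref{mainthm}. First, let $\{\overline{\p}_i\}$ be any fixed collection of points in formation (which exists by hypothesis), and set $\uu_i(t) := \p_i(t) - \overline{\p}_i$. Substituting into Eq.~(\ref{formnoise}), the vectors ${\bf r}_{ij} = \overline{\p}_j - \overline{\p}_i$ cancel and we obtain $\uu_i(t+1) = \sum_j P^{\rm form}_{ij}\uu_j(t) + {\bf n}_i(t)$, which is exactly the noisy consensus iteration of Eq.~(\ref{noisecons}) applied coordinate-wise. The proposition's hypotheses make $P^{\rm form}$ stochastic, symmetric (from $f_{ij}=f_{ji}$), irreducible (graph connected), and aperiodic (since $P^{\rm form}_{ii}>0$ by the step-size condition), so Theorem~\ref{mainthm} applies.

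Next I would identify the formation error $\p_i(t)-\widehat{\p}_i(t)$ with a consensus error. Because the graph is connected, any two collections in formation differ by a uniform translation, so $\widehat{\p}_i(t) - \overline{\p}_i$ is a single vector ${\bf c}(t)$ independent of $i$; the centroid constraint then forces ${\bf c}(t) = (1/n)\sum_j \uu_j(t) =: \overline{\uu}(t)$, giving $\p_i(t) - \widehat{\p}_i(t) = \uu_i(t) - \overline{\uu}(t)$. Since $P^{\rm form}$ is symmetric its stationary distribution is uniform, so $\overline{\uu}(t)$ coincides with the $\pi$-weighted average used in the definition of $\delta(t)$. Because the components of ${\bf n}_i(t)$ are uncorrelated across the $d$ spatial axes, the $d$ coordinates decouple into $d$ independent scalar noisy consensus problems, each with transition matrix $P^{\rm form}$ and noise covariance $\Lambda := \mathrm{Diag}(\lambda_1^2,\ldots,\lambda_n^2)$. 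Summing their contributions yields ${\rm Form}(G,\{f_{ij}\}) = d\cdot\delta_{\rm ss}(P^{\rm form},\Lambda)$.

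Finally I would reconcile this diagonal noise covariance with the somewhat more elaborate one appearing in the statement. Writing $P_{\1^\perp} := I - (1/n)\1\1^T$ and using $(P^{\rm form}-J)\1 = 0 = \1^T(P^{\rm form}-J)$ together with $I-J = P_{\1^\perp}$ in the symmetric case, the series representation of $\Sigma_{\rm ss}$ from Eq.~(\ref{firstsig}) shows that $\delta_{\rm ss}(P^{\rm form},\Sigma_w)$ depends on $\Sigma_w$ only through $P_{\1^\perp}\Sigma_w P_{\1^\perp}$. A direct entry-wise check verifies that $P_{\1^\perp}\Lambda P_{\1^\perp}$ equals the covariance in the proposition (its diagonal entries reduce to $\lambda_i^2 - 2\lambda_i^2/n + (\sum_l \lambda_l^2)/n^2$ and its off-diagonals to $-(\lambda_i^2+\lambda_j^2)/n + (\sum_l \lambda_l^2)/n^2$), which closes the argument. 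The only nontrivial step is recognizing this invariance of $\delta_{\rm ss}$ under conjugation of $\Sigma_w$ by $P_{\1^\perp}$; the remainder of the proof is a straightforward change of variables plus coordinate-wise decoupling.
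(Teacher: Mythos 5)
Your proof is correct, and it reaches the proposition by a mildly but genuinely different route than the paper. The paper changes variables against the \emph{moving} target $\widehat{\p}_i(t)$ (the in-formation configuration sharing the current centroid), shows that this target itself drifts by the average noise $\frac{1}{n}\sum_j {\bf n}_j(t)$, and therefore obtains a consensus recursion driven by the \emph{projected} noise ${\bf n}_i(t)-\frac{1}{n}\sum_j {\bf n}_j(t)$, whose covariance is then computed entrywise and is exactly the matrix in the statement. You instead subtract a \emph{fixed} reference formation, which yields the cleaner recursion $\uu_i(t+1)=\sum_j P^{\rm form}_{ij}\uu_j(t)+{\bf n}_i(t)$ with diagonal coordinatewise covariance $\Lambda={\rm Diag}(\lambda_1^2,\ldots,\lambda_n^2)$, identify the formation error with the consensus error via the translation/centroid argument, and then close the gap with the observation that $\delta_{\rm ss}(P,\Sigma_w)$ depends on $\Sigma_w$ only through $(I-J)\Sigma_w(I-J)^T=P_{\1^\perp}\Sigma_w P_{\1^\perp}$ (immediate from Eq.~(\ref{firstsig}) since $(P-J)(I-J)=P-J$ and $(I-J)^2=I-J$), together with the entrywise check that $P_{\1^\perp}\Lambda P_{\1^\perp}$ equals the stated covariance. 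The two arguments are essentially reparametrizations of one another --- the paper's $\widehat{\bf u}_i(t)$ is your $\uu_i(t)-\overline{\uu}(t)$ --- but yours has the advantage that the identity $\delta_{\rm ss}(P^{\rm form},\Lambda)=\delta_{\rm ss}(P^{\rm form},P_{\1^\perp}\Lambda P_{\1^\perp})$ makes Theorem~\ref{formkemeny} fall out at once (the paper must separately argue that the $\1\1^T$ term contributes nothing), while the paper's route exhibits the statement's covariance as the covariance of a concrete noise process rather than as an equivalence class representative. The one step you should keep explicit is that any two in-formation configurations differ by a common translation (which needs connectivity of $G$) so that $\widehat{\p}_i(t)=\overline{\p}_i+\overline{\uu}(t)$; you do invoke this, so nothing is missing.
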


\begin{proof} We proceed by changing variables to
\[ \widehat{{\bf u}}_i(t) = \p_i(t) - \widehat{\p}_i(t).\] Observe that by definition 
\begin{equation} \label{zerosum} \frac{1}{n} \sum_{i=1}^n \widehat{\bf u}_i(t) = 0. \end{equation} Naturally, we also have that \begin{equation} \label{formu} {\rm Form}(G, \{f_{ij}\}) = \lim \sup_{t \rightarrow \infty} \frac{1}{n} \sum_{i=1}^n E \left[|| \widehat{{\bf u}}_i(t) ||_2^2 \right]. \end{equation} We now observe that the symmetry of the weights $\{f_{ij}\}$ as well as the fact that ${\bf r}_{ij}=-{\bf r}_{ji}$  imply that
\[ \frac{1}{n} \sum_{j=1}^n \p_j(t+1) = \frac{1}{n} \sum_{j=1}^n \p_j(t) + \frac{1}{n} \sum_{j=1}^n {\bf n}_j(t), \] which allows us to conclude  that for all $i=1, \ldots, n$,
\[ \widehat{\p}_i(t+1) = \widehat{\p}_i(t) + \frac{1}{n} \sum_{j=1}^n {\bf n}_j(t).\] In turn, this implies that the quantities $\widehat{{\bf u}}_i(t)$ are updated as 

\begin{small}
\begin{eqnarray} \widehat{\bf u}_i(t+1) 
& = &   \widehat{\bf u}_i(t) + \sum_{j \in N(i)} f_{ij} ( \widehat{\bf u}_j(t) - \widehat{\bf u}_i(t)) + {\bf n}_i(t) \nonumber \\ &  & ~~~~~~~ -  \left( \frac{1}{n} \sum_{j=1}^n {\bf n}_j(t) \right). \label{formcons}
\end{eqnarray} \end{small} Now for each $j=1, \ldots, d$, define $\widehat{\bf u}^j(t)$ to stack up the $j$'th components of the vectors $\widehat{\bf u}_1(t), \ldots, \widehat{\bf u}_n(t)$. We then have that Eq. (\ref{formu}) implies 
\begin{equation} \label{formform} {\rm Form}(G, \{f_{ij}\}) = \lim \sup_{t \rightarrow \infty} \sum_{j=1}^d \frac{1}{n} E \left[||\widehat{\bf u}^j(t)||_2^2 \right], \end{equation}
while Eq. (\ref{zerosum}) implies that for all $t$ and $j = 1, \ldots, d$,  \begin{equation} \label{zerosum2} \frac{1}{n} \1^T \widehat{\bf u}^j(t) = 0, \end{equation} and finally Eq. (\ref{formcons}) implies
\begin{equation} \label{compcons} \widehat{\bf u}^j(t+1) = P^{\rm form} \widehat{{\bf u}}^j(t) + {\bf q}^j(t) \end{equation} where the noise vector ${\bf q}^j(t)$ satisfies
 \begin{eqnarray*} E[{\bf q}^j(t)] & = & 0  \\ E[q_k^j(t) q_m^j(t)] & = & - \frac{\lambda_k^2 + \lambda_m^2}{n} + \frac{\sum_{l=1}^n \lambda_l^2}{n^2} ~~~ \mbox{ for all }  k \neq m \\
E[(q_k^j)^2(t)] & = & \lambda_k^2 - 2 \frac{\lambda_k^2}{n} + \frac{\sum_{l=1}^n \lambda_l^2}{n^2}  \mbox{ for all } k. 
\end{eqnarray*} We may summarize these last three equations as  \begin{small}
\begin{eqnarray} \label{noisecov} E \left[{\bf q}^j(t) ({\bf q}^j)^T (t) \right] & = & \frac{1}{n} \left( n {\rm Diag}(\lambda_1^2, \ldots, \lambda_n^2) - Q \right. \nonumber \\ && \left. ~~~~~~ + \frac{1}{n} \left( \sum_{l=1}^n \lambda_l^2 \right) \1 \1^T \right). \end{eqnarray} \end{small}
Equations (\ref{compcons}), (\ref{zerosum2}), (\ref{formform}), (\ref{noisecov}) now immediately imply the proposition. 
\end{proof}

Summarizing, Proposition \ref{formprop} characterizes the performance of a formation control protocol in terms of the $\delta_{\rm ss}$ of an appropriately defined matrix. We can now apply Theorem \ref{mainthm} to obtain a characterization in terms of features of the underlying matrix. For simplicity, let us focus on the case when the noise covariances are the same at each node, i.e.,  \begin{equation} \label{sv} E[{\bf n}_i(t) {\bf n}_i(t)^T] = \lambda^2 I ~~~ \mbox{ for all } i = 1, \ldots, n.\end{equation} In this case, our main result on formation control is as follows. 

\begin{theorem}[Long-term Performance of Noisy Formation Control with Symmetric Weights] Assuming Eq. (\ref{sv}) holds as well as all the assumptions of Proposition \ref{preprop}, we have that \begin{eqnarray*}{\rm Form}(G, \{f_{ij}\}) & = & d \cdot \lambda^2 \frac{K( (P^{\rm form})^2 )}{n}
\end{eqnarray*} \label{formkemeny}
\end{theorem}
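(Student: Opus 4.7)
The plan is to combine Proposition \ref{formprop} with the Kemeny-constant expression for $\delta_{\rm ss}$ derived in Section \ref{sym.1} (Eq.\ (\ref{kemeny})). Under the simplifying assumption (\ref{sv}), all $\lambda_i^2$ coincide with $\lambda^2$, so I would first specialize the noise covariance appearing in Proposition \ref{formprop} to this setting. Substituting $\lambda_i^2 = \lambda^2$ into $Q_{ij} = \lambda_i^2+\lambda_j^2$ gives $Q = 2\lambda^2 \1\1^T$, and $n\,{\rm Diag}(\lambda_1^2,\ldots,\lambda_n^2) = n\lambda^2 I$. A short algebraic simplification then shows that the noise covariance reduces to
\[
\frac{1}{n}\Bigl(n\lambda^2 I - 2\lambda^2 \1\1^T + \lambda^2 \1\1^T\Bigr) \;=\; \lambda^2\Bigl(I - \tfrac{1}{n}\1\1^T\Bigr) \;=\; \lambda^2 P_{\1^\perp}.
\]

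Next I would argue that replacing $\lambda^2 I$ by $\lambda^2 P_{\1^\perp}$ does not change $\delta_{\rm ss}$ when $P^{\rm form}$ is symmetric. The point is that $\Sigma_w$ enters the covariance recursion of Lemma \ref{srecur} only through $(I-J)\Sigma_w(I-J)^T$. In the symmetric case $\pi = (1/n)\1$, so $J = (1/n)\1\1^T$ and $I-J = P_{\1^\perp}$. Since $P_{\1^\perp}$ is an orthogonal projector (hence idempotent), both choices $\Sigma_w = \lambda^2 I$ and $\Sigma_w = \lambda^2 P_{\1^\perp}$ yield $(I-J)\Sigma_w(I-J)^T = \lambda^2 P_{\1^\perp}$. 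Therefore $\Sigma(t)$, and hence $\delta_{\rm ss}$, are identical in the two cases:
\[
\delta_{\rm ss}\!\left(P^{\rm form},\, \lambda^2 P_{\1^\perp}\right) \;=\; \delta_{\rm ss}\!\left(P^{\rm form},\, \lambda^2 I\right).
\]

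Finally I would invoke Eq.\ (\ref{kemeny}), which expresses the steady-state disagreement of a symmetric chain with identity-covariance noise in terms of the Kemeny constant of $P^2$, to conclude
\[
\delta_{\rm ss}\!\left(P^{\rm form},\, \lambda^2 I\right) \;=\; \lambda^2\, \frac{K\!\bigl((P^{\rm form})^2\bigr)}{n}.
\]
Multiplying by $d$ (accounting for the $d$ independent coordinate-wise consensus updates in Eq.\ (\ref{compcons})) and using Proposition \ref{formprop} yields the theorem. No step here should be difficult, since all the heavy lifting has already been done: Proposition \ref{formprop} reduces formation control to noisy consensus, and Eq.\ (\ref{kemeny}) handles the symmetric identity-covariance case. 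The only mildly delicate point is the second step above, showing that the projected covariance $\lambda^2 P_{\1^\perp}$ may be replaced by $\lambda^2 I$ without changing $\delta_{\rm ss}$; this is immediate from Lemma \ref{srecur} once one observes that $I-J$ equals $P_{\1^\perp}$ in the symmetric case.
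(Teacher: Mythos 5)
Your proposal is correct and follows the same overall skeleton as the paper's proof: specialize Proposition \ref{formprop} to equal variances, reduce the resulting noise covariance to $\lambda^2 I$, and invoke Eq.\ (\ref{kemeny}). The one place where you genuinely diverge is the middle reduction. The paper observes that the right-hand side of Eq.\ (\ref{symmdelta}) is linear in $\Sigma_w$ and vanishes at $\Sigma_w = \1\1^T$ (since ${\rm Tr}(H\1\1^T) = \1^T H \1$), so that $\delta_{\rm ss}\bigl(P^{\rm form}, \tfrac{\lambda^2}{n}(nI-\1\1^T)\bigr) = \delta_{\rm ss}(P^{\rm form}, \lambda^2 I)$; this argument lives at the level of the combinatorial formula and so implicitly relies on Theorem \ref{mainthm}. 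You instead work at the level of the dynamics: $\Sigma_w$ enters the recursion of Lemma \ref{srecur} only through $(I-J)\Sigma_w(I-J)^T$, and since $I-J = P_{\1^\perp}$ is idempotent in the symmetric case, the two covariances $\lambda^2 I$ and $\lambda^2 P_{\1^\perp}$ produce identical trajectories $\Sigma(t)$, hence identical $\delta_{\rm ss}$. Your version is slightly more elementary and gives a marginally stronger conclusion (equality of the whole covariance trajectory, not just of the steady-state trace), while the paper's linearity argument is a one-line consequence of machinery it has already built and extends without change to any perturbation of $\Sigma_w$ by a multiple of $\1\1^T$. Both are valid; your algebraic simplification of the covariance in Proposition \ref{formprop} to $\lambda^2 P_{\1^\perp}$ matches the paper's $\Sigma^{\rm form} = \tfrac{\lambda^2}{n}(nI - \1\1^T)$ exactly.
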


\begin{proof} Having already established Proposition \ref{formprop} and Theorem \ref{mainthm}, all that is left is  to combine them. Indeed, if we define 
\[ \Sigma^{\rm form} = \frac{\lambda^2}{n} \left( n I - \1 \1^T\right) \] then Proposition \ref{formprop} for the case of equal-covariances may be 
succintly stated as  
\[ {\rm Form}(G) = d \cdot \delta_{\rm ss} \left( P^{\rm form}, \Sigma^{\rm form} \right). \] Since $P^{\rm form}$ is symmetric, we may apply Eq. (\ref{symmdelta}). 
However, observe that the right-hand side of Eq. (\ref{symmdelta}) is linear in $\Sigma_w$, and plugging in $\Sigma_w = \1 \1^T$ makes the right-hand side of that equation zero. 
Consequently, 
\[ {\rm Form}(G) = d \cdot \delta_{\rm ss} \left( P^{\rm form}, \lambda^2 I \right). \] We  now appeal to 
Eq. (\ref{kemeny}) to complete the proof of this proposition.
\end{proof} 

Thus the long-term performance of formation control is proportional to the Kemeny constant of an underlying matrix. 

We next focus on understanding how the performance of formation control scales with the underlying graph. Of course, there are many possible choices of symmetric $\{f_{ij}\}$ for any given undirected graph $G$. We consider the following choice, which is perhaps the simplest: we set all $f_{ij}$ where $(i,j) \in E$ to some fixed $\epsilon$. In order to satisfy the condition that $\sum_{j \in N(i)} f_{ij}  < 1$ we need to choose $\epsilon$ strictly smaller than the largest degree; to avoid trouble, we therefore choose 
\[ \epsilon = \frac{1}{2 \max_{i} d(i)}. \]  With this choice, ${\rm Form}(G, \{f_{ij}\})$ becomes only a function of the graph $G$, so that we will simply write ${\rm Form}(G)$ henceforth.

We can now use Theorem \ref{formkemeny} to compute the performance of the above-described formation control protocol on various graphs. This requires the computation of hitting times, and since this is something we have done in Section \ref{examples}, we can simply reuse the calculation we have already done (the present choice of coefficients $f_{ij}$ is only a minor modification). We therefore omit an extended discussion and conclude this section with the following list. 

\medskip

\begin{itemize} \item If $G$ is the complete graph, ${\rm Form}(G) \simeq d\lambda^2$. 
\medskip
\item If $G$ is the line graph, ${\rm Form}(G) \simeq dn \lambda^2$. 
\medskip
\item If $G$ is the ring graph, ${\rm Form}(G) \simeq dn \lambda^2$. 
\medskip
\item If $G$ is the 2D grid, ${\rm Form}(G) = d \lambda^2 O ( \log n)$. 
\medskip
\item If $G$ is complete binary tree, ${\rm Form}(G) = d \lambda^2 O ( \log n)$. 
\medskip
\item If $G$ is the 3D grid, ${\rm Form}(G) \simeq d\lambda^2$. 
\medskip
\item If $G$ is the star graph, then ${\rm Form}(G) = O ( d n \lambda^2)$. 
\medskip
\item If $G$ is the two-star graph, then ${\rm Form}(G) = O ( d n \lambda^2)$. 
\medskip
\item If $G$ is a regular $\alpha$-expander, then ${\rm Form}(G) = O( d \lambda^2 / \alpha^2)$. 
\medskip
\item If $G$ is a regular dense graph (recall this means that the degree of each node is at least $\lfloor n/2 \rfloor$), then ${\rm Form}(G) \simeq d \lambda^2$. 
\medskip
\item If $G$ is a regular graph, then ${\rm Form}(G) = O( d n \lambda^2)$. 
\end{itemize}

\section{Simulations\label{simul}}

We now present some simulations  intended to demonstrate how some of the scalings we have derived manifest themselves in some concrete formation control problems. Indeed, a central consequence of our results is that some graphs are  better than others by orders of magnitude. We note that similar observations have been made in the previous literature for a number of concrete graphs; a notable reference is \cite{bamieh1} which considered grids with constant spacing and demonstrated a dramatic difference between the line graph and the 2D and 3D grids. 

We focus here on the star  graph (where ${\rm Form}(G) = O( d n \lambda^2)$) and on the complete binary tree
where ${\rm Form}(G) = O( d \lambda^2 \log n)$. Figures \ref{form1} and \ref{form2} demonstrate the difference between the logarithmic and linear scaling with the number of nodes. In Figure \ref{form1}, we see a single run both protocols with seven nodes; the noise here is rather tiny, $\lambda^2 = 1/2500$, whereas all the relative positions have magnitude $1$ for the star graph and at least one for the binary tree. It might be expected that such a small noise would make relatively little difference, and indeed both formation seem to do reasonably well.

We need a quantitative measure of performance in order to make the last statement precise, which we define as follows. Taking the final positions, $\p_1^{\rm final}, \ldots, \p_n^{\rm final}$ after a given run, we define as in Section \ref{symmetric} the positions $\widehat{\p}_1^{\rm final}, \ldots, \widehat{\p}_n^{\rm final}$ to be positions in formation with the same centroid as $\p_1^{\rm final}, \ldots, \p_n^{\rm final}$. We then define 
\[ {\rm Form}(G, \p_1^{\rm final}, \ldots, \p_n^{\rm final}) := \sum_{i=1}^n \frac{1}{n} \left| \left|\p_i^{\rm final} - \widehat{\p}_i^{\rm final} \right| \right|_2^2. \] 

The quantity ${\rm Form}(G, \p_1^{\rm final}, \ldots, \p_n^{\rm final})$ may be thought as measure of performance: it is  the per-node squared distance to the closest optimal formation. Returning to Figure \ref{form1}, we see that  ${\rm Form}(G, \p_1^{\rm final}, \ldots, \p_n^{\rm final})$ is quite small for both formations. However, as we scale up to $n=127$ in Figure \ref{form2}, we now see that ${\rm Form}(G, \p_1^{\rm final}, \ldots, \p_n^{\rm final}) $ grows much faster on the star formation than on the tree formation, which results in a dramatic difference in performance. In particular, we see that even a tiny noise with $\lambda^2 = 1/2500$ essentially overwhelms the star formation.

\begin{center} \begin{figure*} \begin{center}
\begin{tabular}{cc} 
\includegraphics[scale=0.45]{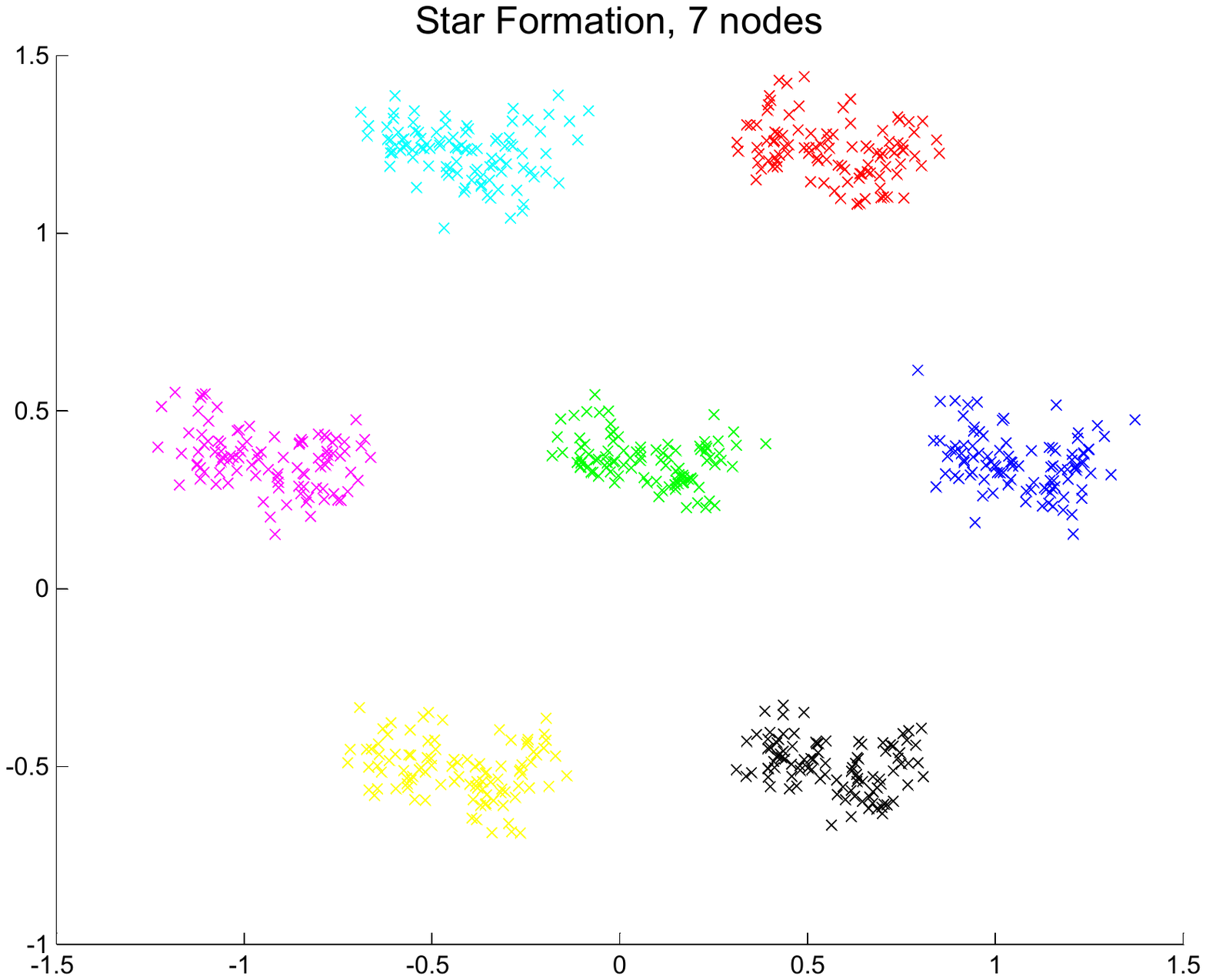} 
&  \includegraphics[scale=0.45]{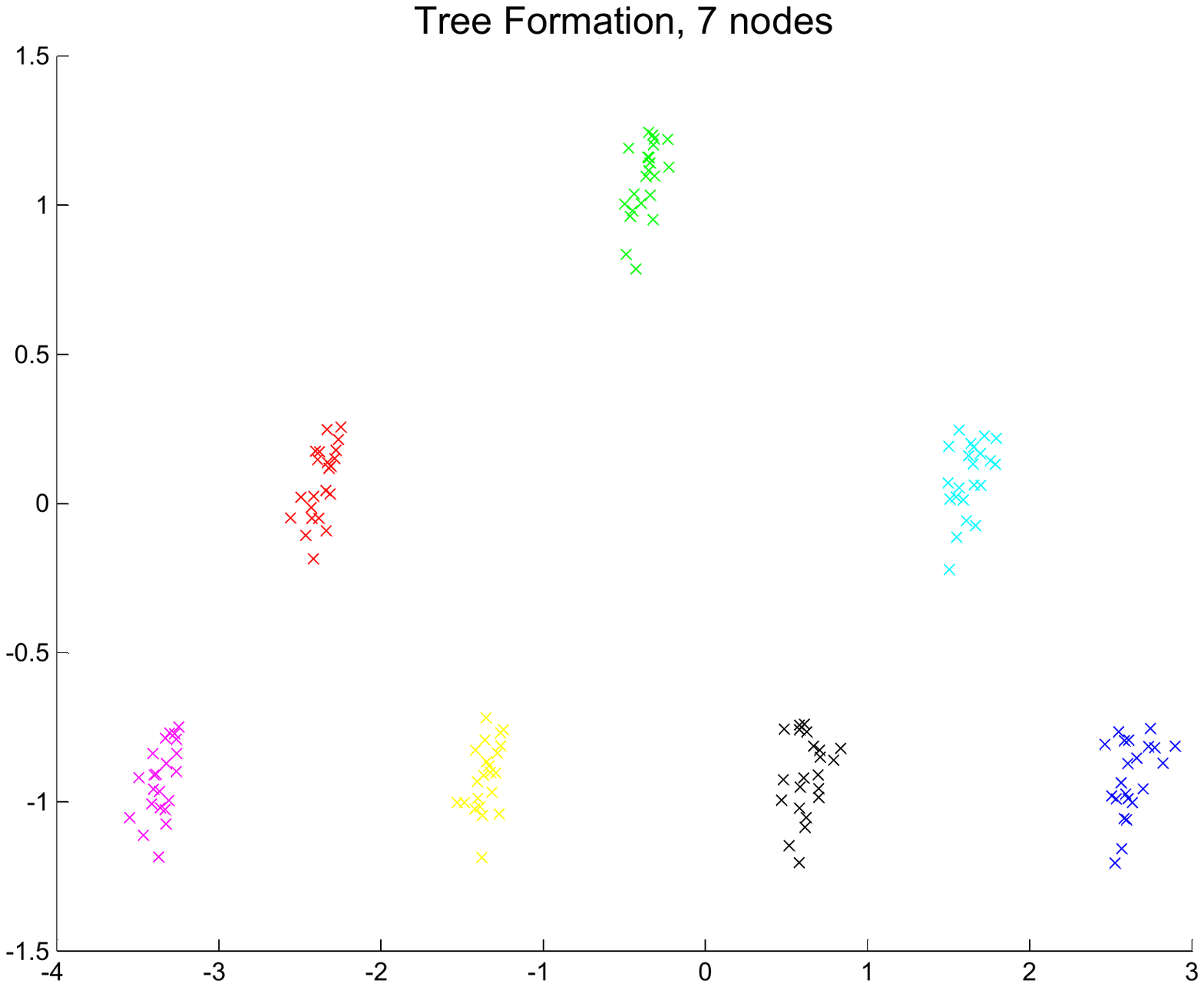} 
\end{tabular} \caption{On the left we show a single run of Eq. (\ref{formnoise}) on a star formation on seven nodes, while on the right we show the same for the tree formation. Both plots show positions from a single run with $w(t)=(1/50) X(t)$ where $X(t)$ are i.i.d. standard Gaussians; each plot shows 22 positions from about 2000 iterations.  Although this is hard to tell with the naked eye, the protocol performs a little better on the star formation here; for the collection of final positions $\p_1^{\rm final}, \ldots, \p_{n}^{\rm final}$, we have that ${\rm Form}(G,\p_1^{\rm final}, \ldots, \p_{n}^{\rm final})  \approx 5 \cdot 10^{-4}$ on the star formation, while ${\rm Form}(G,\p_1^{\rm final}, \ldots, \p_{n}^{\rm final})   \approx 0.001$  on the tree formation.  \label{form1}}\end{center}
\end{figure*} \end{center}

\begin{figure*} \begin{center}
\begin{tabular}{cc} 
\includegraphics[scale=0.45]{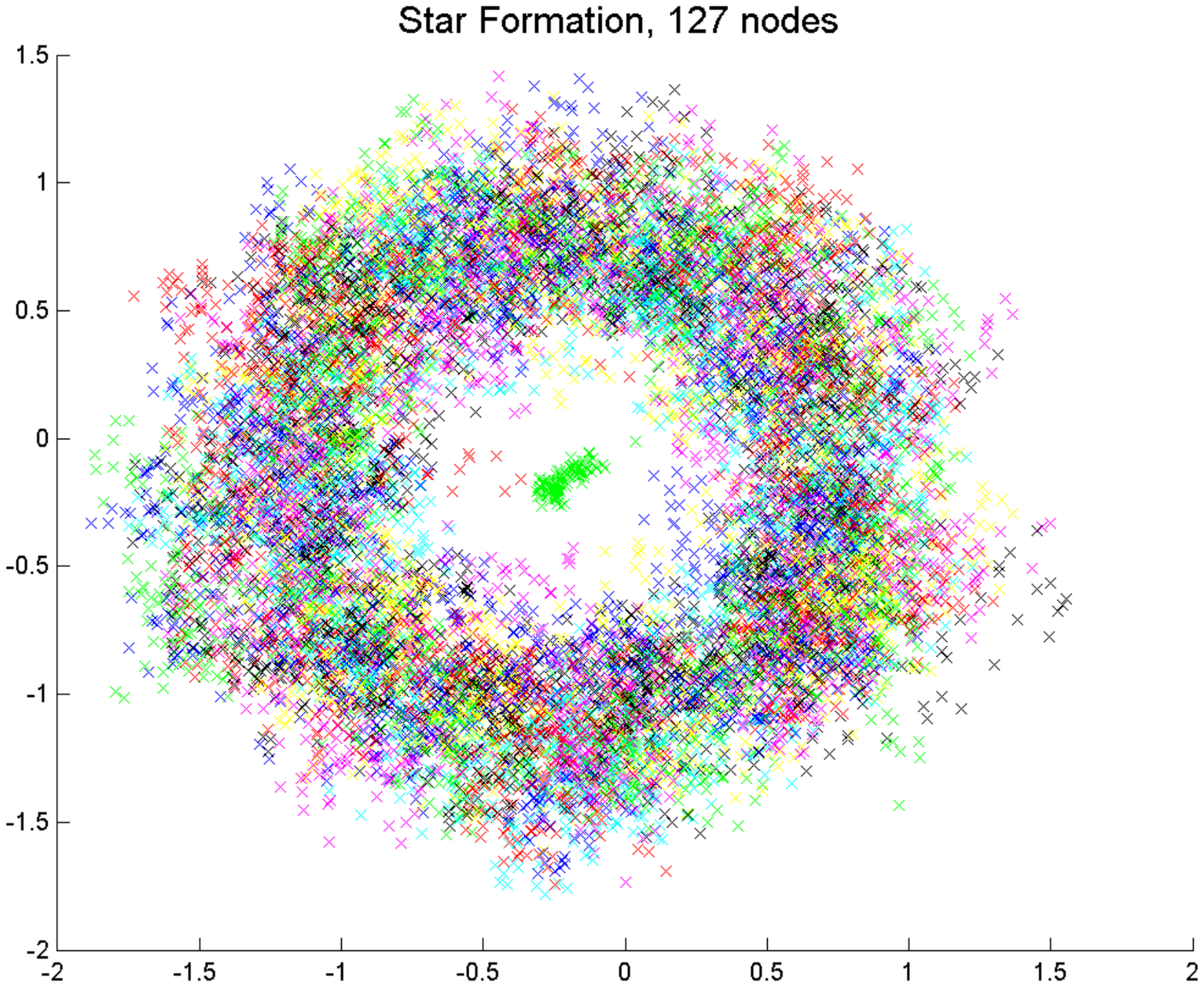} 
&  \includegraphics[scale=0.45]{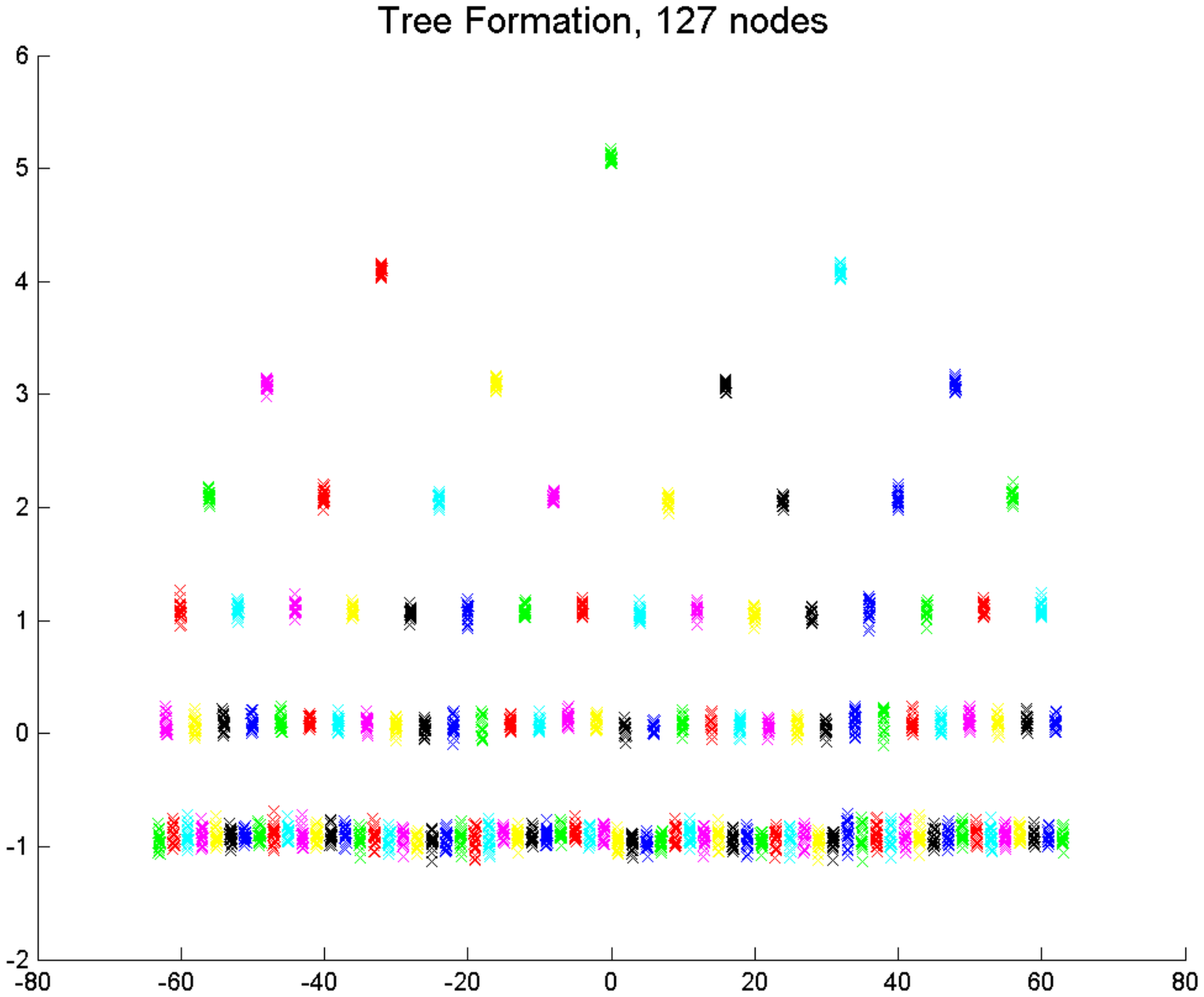} 
\end{tabular} \caption{On the left we show a single run of Eq. (\ref{formnoise}) on a star formation on 127 nodes, while on the right we show the same for the tree formation. Both plots show positios from a single run with $w(t)=(1/50) X(t)$ where $X(t)$ are i.i.d. standard Gaussians; each plot shows 22 positions from about 2000 iterations. We note that the superior appearance of the protocol on the tree formation is not merely due to the increased horizontal spread (see axis labels); in fact, we have that ${\rm Form}(G,\p_1^{\rm final}, \ldots, \p_{n}^{\rm final})  \approx 0.049$ on the star formation, while ${\rm Form}(G,\p_1^{\rm final}, \ldots, \p_{n}^{\rm final})   \approx 0.0049$ (an order of magnitude smaller) on the tree formation.} \label{form2} \end{center}
\end{figure*}

\section{Conclusion\label{conclusion}} The main contributions of this paper are three-fold. First, we have given an explicit expression for the  weighted steady-state disagreement in reversible stochastic linear systems in terms of  stationary distribution and hitting times of appropriate Markov chains. Second, we have given the best currently known bounds for unweighted steady-state disagreement in terms of the same quantities. Finally, we have shown how the Kemeny constant characterizes the performance of a class of noisy formation control protocols. Additionally, we have worked out weighted steady-state disagreement over a number of common graphs. 

An open question is whether similar results might be obtained without the technical assumption of reversibility. Furthermore, the question of obtaining an exact ``combinatorial'  expression for the quantity $\delta_{\rm ss}^{\rm uni}$ is also open. Finally, it is also interesting to wonder how the results we have presented here might be extended to time-varying linear systems. 

More broadly, we wonder whether one can find more connections between probabilistic or combinatorial quantities and the behavior of linear systems. Indeed, we would argue that the past decade of research of distributed control has highlighted the importance of studying linear systems on graphs. Relating classical quantities of interest in control theory, such as stability and noise robustness, to the combinatorial features of the graphs underlying the  system could have a significant repercussions in the control of multi-agent systems.


\begin{thebibliography}{99} 

\bibitem{kalman3} M. Alighanbari, J. P. How, ``Unbiased Kalman consensus algorithm,'' 
{\em Journal of Aerospace Computing, Information, and Communication}, vol. 5, pp. 298-311, 2008.



\bibitem{bamieh1} B. Bamieh, M. R. Jovanovic, P. Mitra, S. Patterson, ``Coherence in large scale networks:
dimension-dependent limitations of local feedback,'' {\em IEEE Transactions on Automatic Control}, vol. 57, 
no. 9, pp. 2235-2249, 2012. 





\bibitem{kalman1} R. Carli, A. Chiuso, L. Schenato, S. Zampieri, ``Distributed Kalman filtering based on consensus 
strategies,'' {\em IEEE Journal on Selected Areas in Communications}, vol. 26, no. 4, pp. 622-633, 2008.





\bibitem{rescomm} A. K. Chandra, P. Raghavan, W. L. Ruzzo, R. Smolensky, P. Tiwari, ``The electrical resistance of a graph
captures its commute and cover times,'' {\em Computational Complexity}, vol. 6, no. 4, pp. 312-340, 1996. 

\bibitem{CBH09} H.-L. Choi, L. Brunet, J.P. How, ``Consensus-based decentralized
auctions for task assignment,'' {\em IEEE Transactions on Robotics}, vol. 25, pp. 912-926, 2009. 

\bibitem{CFS} D. Coppersmith. U. Feige, J. Shearer, ``Random walks on regular and irregular graphs,'' 
{\em SIAM Journal on Discrete Mathematics}, vol. 9, no. 2, pp. 301-308, 1996.




\bibitem{rig1} T. Eren, P. N. Belhumeur, B. D. O. Anderson, A. S. Morse, ``A framework for maintaining formations
based on rigidity,'' {\em Proceedings of the IFAC World Congress}, pp. 499-504, 2002. 

\bibitem{fm04} J. A. Fax, R. M. Murray, ``Information flow and cooperative control of vehicle formations,'' 
{\em IEEE Transactions on Automatic Control}, vol. 49, no. 9, pp. 1465-1476, 2004.








\bibitem{gcb08} C. Gao, J. Cortes, F. Bullo, ``Notes on averaging over acyclic digraphs and discrete coverage control,'' {\em Automatica},
vol. 44,  pp. 2120-2127, 2008.




\bibitem{JLM} A. Jadbabaie, J.  Lin, and A. S.  Morse, ``Coordination of groups
of mobile autonomous agents using nearest neighbor rules,'' {\it
IEEE Transactions on Automatic Control}, vol.\ 48, no.\  3, pp.\
988-1001, 2003.

\bibitem{jo-prev} A. Jadbabaie, A. Olshevsky, ``Combinatorial bounds and scaling laws for noise amplification in networks,'' 
{\em Proceedings of the European Control Conference}, pp. 596-601, 2013. 



\bibitem{kirkland} S. Kirkland, ``On the Kemeny constant and stationary distribution vector for a Markov chain,'' 
{\em Electronic Journal of Linear Algebra}, vol. 27, pp. 354-372, 2014.

\bibitem{krick} L. Krick, M. E. Broucke, B. A. Francis, ``Stabilization of infinitesimally rigid formations of 
multi-robot networks,'' {\em International Journal of Control}, vol. 82, no. 3, pp. 423-439, 2009.



\bibitem{kem2} M. Levene, G. Loizou, ``Kemeny's constant and the random surfer,'' {\em American Mathematical Monthly}, 
vol. 109, pp. 741-745, 2002. 

\bibitem{lpw} D. Levin, Y. Peres, E. Wilmer, {\em Markov Chains and Mixing Times}, American Mathematical Society, 2009. 


\bibitem{LoweTorres} M. Lowe, F. Torres, ``On hitting times for a simple random walk on dense Erdos-Renyi 
random graphs,'' {\em Statistics \& Probability Letters}, vol. 89, pp. 81-88, 2014.

\bibitem{zmp} E. Lovisari, F. Garin, S. Zampieri, ``Resistance-based performance analysis of the consenus algorithm over
geometric graphs,'' {\em SIAM Journal on Control and Optimization}, vol. 51, no. 5, pp. 3918-3945,  2013.

\bibitem{cons-task2} T. Mercker, D. W. Casbeer, P. T. Millet, M. R. Akella, ``An extensions of consensus-based auction
algorithms for decentralized, time-constrained task assignment,'' {\em Proceedings of the American Control Conference}, 
pp. 6324-6329, 2010.






\bibitem{NO} A. Nedic, A. Ozdaglar, ``Distributed subgradient methods for multi-agent optimization,'' 
{\em IEEE Transactions on Automatic Control}, vol. 54, no. 1, pp. 48-61, 2009. 

\bibitem{leader2} W. Ni, D. Cheng, ``Leader-following consensus of multi-agent systems under fixed and switching 
topologies,'' {\em Systems \& Control Letters}, vol. 59, no. 3--4, pp. 209-217, 2010.

\bibitem{ofm07} R. Olfati-Saber, J.A. Fax, R. M. Murray, ``Consensus and cooperation in networked multi-agent systems,''
{\em Proceedings of the IEEE}, vol. 95,  pp. 215-233, 2007.

\bibitem{rig2} R. Olfati-Saber, R. M. Murray, ``Graph rigidity and distributed formation stabilization of multi-vehicle 
systems,'' {\em Proceedings of the IEEE Conference on Decision and Control}, 2002. 

\bibitem{othesis} A. Olshevsky, {\em Efficient Information Aggregation Strategies for Distributed Control 
and Signal Processing}, Ph.D. thesis, Department of Electrical Engineering and Computer Science, MIT,
2010. 

\bibitem{olinear} A. Olshevsky, ``Linear Time Average Consensus on Fixed Graphs and Implications for 
Decentralized Optimization and Multi-Agent Control,'' \url{http://arxiv.org/abs/1411.4186}


\bibitem{bamieh2} S. Patterson, B. Bamieh,  ``Consensus and coherence in fractal networks,'' {\em IEEE Transactions on 
Control of Network Systems}, vol. 1, no. 4, pp. 338-348, 2014.

\bibitem{naomi3} I. Poulakakis, L. Scardovi, N. E. Leonard, ``Node classification in networks of stochastic 
evidence accumulators,'' \url{http://arxiv.org/abs/1210.4235}


\bibitem{ren} W. Ren, ``Consensus strategies for cooperative control of vehicle formations,'' {\em IET Control Theory \& Applications}, 
vol. 1, no. 2, pp. 505-512, 2007.

\bibitem{ren2} W. Ren, R. W. Beard, E. M. Atkins, ``Information consensus in multivehicle cooperative control,'' 
{\em IEEE Control Systems}, vol. 27, no. 2, pp. 71-82, 2007.


\bibitem{ren-form2} W. Ren, R. W. Beard, {\em Distributed Consensus in Multi-vehicle Cooperative Control}, 
Springer, 2008.

\bibitem{ren-form1} W. Ren, R. W. Beard, E. M. Atkins, ``Information consensus in multi-vehicle cooperative 
control,'' {\em IEEE Control Systems}, vol. 27, no. 2, 2007.


\bibitem{kalman4} A. Ribeiro, I. Schizas, S. Roumeliotis, G. Giannakis, ``Kalman Filtering in Wireless Sensor 
Networks,'' {\em IEEE Control Systems Magazine}, vol. 30, no. 2, pp. 66-86, 2010.


\bibitem{mot1} M. Siami, N. Motee, ``Fundamental limits and tradeoffs on disturbance propagation in 
large-scale dynamical networks,'' {\em Proceedings of the 52nd IEEE Conference on Decision and Control}, pp. 67-72, 2013. 

\bibitem{mot2} M. Siami, N. Motee, ``Systemic measures for the performance and robustness of large-scale
interconnected networks,'' {\em Proceedings of the 53rd IEEE Conference on Decision and Control}, pp. 5119-5124, 2014.

\bibitem{conscov2} M. Schwager, J. J. Slotine, D. Rus, ``Consensus learning for distributed coverage control,'' 
{\em Proc. of the IEEE International Conference on Robotics and Automation}, pp. 1042-1048, 2008.


\bibitem{kalman2} D. Spanos, R. Olfati-Saber, R. M. Murray, ``Approximate distributed Kalman filtering in sensor networks
with quantifiable performance,'' {\em Proceedings of the International Symposium on Information Processing in 
Sensor Networks}, 2005. 




\bibitem{summers} T. Summers, I. Shames, J. Lygeros, F. Dorfler, ``Topology design for optimal network coherence,'' 
{\em Proceedings of the European Control Conference}, 2015.

\bibitem{tsi} J. N. Tsitsiklis, D. P. Bertsekas, and M. Athans,
``Distributed asynchronous deterministic and stochastic gradient
optimization algorithms,'' {\em IEEE Transactions on Automatic
Control}, vol. 31, pp. 803-812, 1986.



\bibitem{boyd} L. Xiao, S. Boyd, S. J. Kim, ``Distributed average consensus with least-mean-square deviation,'' 
{\em Journal of Parallel and Distributed Computing}, vol. 67, no. 1, pp. 33-46, 	2007.

\bibitem{naomi1} G. F. Young, L. Scardovi, N. E. Leonard, ``Robustness of noisy consensus dynamics with
directed communication,'' {\em Proceedings of the American Control Conference}, pp. 6312-6317, 2010. 

\bibitem{naomi2} G. F. Young, L. Scardovi, N. E. Leonard, ``Rearranging trees for robust consensus,'' 
{\em Proceedings of the 50th IEEE Conference on Decision and Control and European Control Conference}, pp. 1000-1005, 
2011. 

\bibitem{rig3} C. Yu, J. M. Hendrickx, B. Fidan, B. D. O. Anderson, V. D. Blondel, ``Three and higher dimensional 
autonomous formations: rigidity, persistence, and structural persistence,'' {\em Automatica}, vol. 43, no. 3, 
pp. 387-402, 2007.


\end{thebibliography}
\end{document}